\providecommand{\U}[1]{\protect\rule{.1in}{.1in}}
\newtheorem{theorem}{Theorem}
\newtheorem{definition}[theorem]{Definition}
{\theorembodyfont{\rmfamily}

}
\newtheorem{lemma}[theorem]{Lemma}
\newtheorem{remark}[theorem]{Remark}
\newenvironment{proof}[1][Proof]{\noindent\textbf{#1} }{\ \rule{0.5em}{0.5em}}
\begin{document}

\author{O. Kneuss and W. Neves\\Instituto de Matem\'atica, Universidade Federal do Rio de Janeiro,\\ Cidade
Universit\'aria 21945-970,\\ Rio de Janeiro, Brasil\\olivier.kneuss@im.ufrj.br, wladimir@im.ufrj.br}
\title{Flows generated by divergence free 
vector fields
with compact support }
\maketitle

\begin{abstract}
We are concerned with the theory of existence and uniqueness of
flows generated by divergence free vector fields with compact support. 
Hence, assuming that the velocity 
vector fields are measurable, bounded, 
and the flows in 
the Euclidean space are measure preserving,
we show two counterexamples of uniqueness/existence  
for such flows. First we consider the autonomous case in dimension 3, and then, 
the non autonomous one in dimension 2.  
\end{abstract}

\section{Introduction}

We are concerned in this paper with the theory of existence and uniqueness of
flows generated by compactly supported,
divergence free vector fields. Moreover, we assume that the velocity 
vector fields are measurable, bounded, 
without differentiability regularity,
and the flows in 
the Euclidean space are measure preserving (with respect to Lebesgue measure).
Under these conditions we show two counterexamples of uniqueness/existence  
for such flows. First we consider the autonomous case in dimension 3, and then, 
the non autonomous one in dimension 2.  

\smallskip
The fundamental questions about the relation between 
velocity vector fields and flows 
come from long time ago with Lagrange, Euler, Bernoulli
among others important mathematicians.
In present-day it seems to be
reinitiated by Nelson \cite{Nelson} and 
put in more evidence by  Aizenman in his celebrated paper \cite{Aizenman}. 
This type of flows, as mentioned above, are encountered in 
many physical applications, for instance, related to fluid 
flow problems.

\smallskip
Although, one usually studies fluid dynamics using the Eulerian approach
instead of Lagrangean point of view given by the flow. This leaves 
to time evolution partial differential equations, in particular 
linear transport equations, which the uniqueness of weak solutions, 
for low regularity of the vector fields (called drift) has taken much attention. 
In this direction, we briefly recall the approach initiated in 1989 by DiPerna, Lions \cite{DiPerna-Lions},
where they proved uniqueness of weak solutions
for drift vector fields with Sobolev $W^{1,1}$ spatial regularity,
applying the nowadays well known commutators idea. Hence 
in 2004, Ambrosio \cite{Ambrosio} supported again on commutators, but
with a different measure-theoretic framework, 
extended the results of DiPerna, Lions 
for bounded variation drift vector fields. On 
those two papers, the uniqueness of the flow were obtained from the
uniqueness of the linear transport equation.  

\smallskip
Since Ambrosio's cited paper \cite{Ambrosio} there is a great effort to pass beyond 
BV vector fields. We remark that, 
the autonomous case in dimensions 2 is very particular 
(because the Hamiltonian structure),
and is completely understood. Indeed, it is proved in
\cite{A.B.C.Weak.Sard} a necessary and sufficient
condition for the uniqueness of bounded solutions of the linear transport equations, 
for bounded (divergence free) drifts
$a$; namely the Lipschitz potential $f$ of $a$ (i.e.
$a=(\partial_yf,-\partial_xf)$)
has to satisfy a ``weak'' Sard condition.
Moreover, it is
constructed in \cite{A.B.C.Structure.Lip} 
(see also Corollary 4.8 and its proof in \cite{A.B.C.Weak.Sard})  
a divergence free vector field $a$ with compact support
belonging to $C^{0,\alpha}(\mathbb{R}^2;\mathbb{R}^2)$ for every $\alpha<1$, 
for which the transport equation has
more than one solution.
Obviously this also provides a counterexample in dimensions three and higher (giving hence another proof of
Theorem \ref{theorem.supp.comp.R.3}). However it is not
known whether the vector field generates more that one regular flow (see definition below). 

\smallskip
The non uniqueness results established here are inspired by the strategies initiated by 
Aizenman \cite{Aizenman}, which is to say, to generate more than one flow 
from the same velocity vector field using low dimensional sets, see also Depauw \cite{Depauw}.
The precise description 
is made with details
in the following sections. 
Since the uniqueness of the linear transport equations implies 
uniqueness of the flow, as by product, our results implies non uniqueness of
the transport equations without the ``weak'' Sard property.  
This is very important for applications, let us mention two 
interesting open problems: The former one is the solvability of 
the Muskat problem, where the uniqueness (or renormalization) 
of the linear transport equations
with $L^2$ integrability of divergence free drift vector fields is an important
step towards the solution of this problem, see \cite{ChemetovNeves1, 
ChemetovNeves2}, and \cite{FNO} too. The second very interesting open problem is 
the wellposedness of the incompressible Euler's equations in dimension 3. Again, 
it is very important to know whenever
the renormalization property holds
for $L^2$ (divergence free) vector fields, see Lions' books \cite{lion1,lion2},
also De Lellis \cite{DeLellis}. 

\smallskip
We have sharpened the above two open problems,
with the counterexamples of uniqueness/renormalization established
in this paper. Albeit, it is not possible to close them yet, since the vorticity in
both problems should has some regularity, which is 
not the case in our examples. 

\subsection{Notation and Background}

At this point, we fix the notation used throughout the paper,
and recall some well known background. 

We denote by $\operatorname{div}$ the usual divergence operator.
Here $|\cdot|$ stands for the Lebesgue measure in $\mathbb{R}^n$, $(n=2, 3)$. 
Unless specified the contrary, 
any measure framework considered is respect
to Lebesgue measure.  

\begin{definition}
A family $\{\phi_t\}_{t \in \mathbb{R}}$, $\phi_t: \mathbb{R}^n \to \mathbb{R}^n$ of measurable maps is called
a measure preserving flow in $\mathbb{R}^n$, when it satisfies:

$(1)$ For each $t \in \mathbb{R}$, and every measurable set $A \subset \mathbb{R}^n$, 
$$
   |\phi_t^{-1}(A)|= |A|.
$$
The previous equation can be equivalently replaced by
$$
    \int_{\mathbb{R}^n}h(\phi_t(x)) \ dx
    = \int_{\mathbb{R}^n}h(y) \ dy, 
    \quad \text{for every $h\in L^1(\mathbb{R}^n)$}.
$$

$(2)$ For each $t_1, t_2 \in \mathbb{R}$, and a.e. $x \in \mathbb{R}^n$
$$
    \phi_{(t_1+t_2)}(x) = \phi_{t_1}(\phi_{t_2}(x)).
$$
\end{definition}

\begin{definition} \label{DEFLF}
Let $a(t,x)$ be a measurable vector field from $\mathbb{R} \times \mathbb{R}^n$ to $\mathbb{R}^n$, such that, 
$|a(t,x)| \leq \alpha(t)$ for some nonnegative function
$\alpha \in L^1_{\rm loc}(\mathbb{R})$. For each $T> 0$,
a mapping $\phi: [-T,T] \times \mathbb{R}^n \to \mathbb{R}^n$, $(\phi_t(\cdot) \equiv \phi(t,\cdot))$, 
is called a flow generated by the vector field $a(t,x)$, if for a.e. $x \in \mathbb{R}^n$,
the map $\phi(\cdot,x)$ is absolutely continuous in any compact subset of
$[-T,T]$, and satisfies
\begin{equation}
\label{ODEWS}
  \phi(t,x)= x + \int_0^t a\big(s,\phi(s,x)\big) \ ds.
\end{equation}

Moreover, we say that $\phi_t$ is regular if 
there exist positive constants $C, \tilde{C}$
(independent of $t$), such that, for each
Borel set $B \subset \mathbb{R}^n$
$$
  C \; |B| \leq \; \mu_{\phi_t}(B) \;
  \leq \tilde{C} \; |B|,
$$
where $\mu_{\phi_t}$ is the push-forward of the Lebesgue measure through the flow $\phi_t$. 
\end{definition}

\medskip
One remarks that, a necessary condition for a flow $\phi_t(\cdot)$ generated by $a(t,\cdot)$
be measure preserving is: $\operatorname{div} a= 0$ in a suitable sense. 

\section{The autonomous case}

\begin{theorem}
\label{theorem.supp.comp.R.3}
\textbf{Part 1: Non uniqueness.}
There exists a divergence free  vector field $a \in L^\infty(\mathbb{R}^3;\mathbb{R}^3)$
with compact support generating two distinct measure preserving flows satisfying the group property a.e..
More precisely, it will be shown the existence of two distinct measurable maps
$\phi,\psi:\mathbb{R}\times \mathbb{R}^3\rightarrow \mathbb{R}^3$
satisfying, for every $t \in \mathbb{R}$ and a.e $x \in \mathbb{R}^3$,
$$
    \phi(t,x)= x+\int_0^t a(\phi(s,x))ds,\quad \psi(t,x)= x + \int_0^ta(\psi(s,x))ds,
$$
such that $\phi(t,\cdot)$ and $\psi(t,\cdot)$ both preserve the Lebesgue
measure for every $t \in \mathbb{R}$ and such that, for a.e. $x\in \mathbb{R}^n$, for every 
$t_1\in \mathbb{R}$ and for every $t_2\in \mathbb{R}$ except a countable set (depending on $x$),
$$
    \phi(t_1+t_2,x)= \phi(t_1,\phi(t_2,x))
    \quad \text{and}\quad 
    \psi(t_1+t_2,x)= \psi(t_1,\psi(t_2,x)).
$$
Moreover, there exists a nontrivial
$L^{\infty}([0,\infty)\times \mathbb{R}^3)$ weak solution of
$$
    \partial_t u + \langle a;\nabla_x u\rangle= 0\quad \text{and}\quad u(0,\cdot)=0,
$$
which explicitly means that, for every $h\in C^{\infty}_c([0,\infty)\times \mathbb{R}^3)$
$$
    \int_{0}^{\infty}\int_{\mathbb{R}^3}  u(t,x) \, \left(\partial_t h(t,x)+\langle a(x);\nabla_x
    h(t,x)\rangle\right)dxdt=0.
$$
\textbf{Part 2: Non existence.}
There exists a compactly supported, divergence free  vector field $\tilde{a} \in L^\infty(\mathbb{R}^3;\mathbb{R}^3)$ 
generating no measure preserving flow satisfying the group property a.e..
\end{theorem}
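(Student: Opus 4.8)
The plan is to follow the philosophy of Aizenman \cite{Aizenman} and Depauw \cite{Depauw}: the non-uniqueness (resp.\ non-existence) will be produced by a genuinely three--dimensional, compactly supported, incompressible ``switching'' gadget whose integral curves are forced, infinitely often, to make a binary choice on a set of zero Lebesgue measure. First I would build one such gadget: a bounded divergence free field $a_0$ supported in a region $W\subset\mathbb{R}^3$ diffeomorphic to a thickened (or solid) torus and tangent to $\partial W$, so that it extends by zero to a compactly supported field on $\mathbb{R}^3$. It must have the following features: for a.e.\ $x\in W$ the $a_0$--orbit through $x$ is periodic and crosses a fixed $2$--dimensional transversal $D$; the first return map $R\colon D\to D$, and the flow between consecutive returns, are defined and measure preserving off a $1$--dimensional ``branch locus'' $\Gamma\subset D$ (the trace of a separatrix of an underlying planar Hamiltonian picture, suspended by a drift); off $\Gamma$ the solution of the integral equation \eqref{ODEWS} is unique; and along $\Gamma$ the field degenerates --- vanishing along, or being tangent to, a degenerate separating surface reached in \emph{finite} time --- so that an integral curve arriving at $\Gamma$ admits \emph{exactly two} absolutely continuous continuations $R^+,R^-$, both measure preserving. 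Constructing $a_0$ with all of this simultaneously is the technical core; incompressibility is what makes it delicate, since the divergence theorem applied to sublevel regions of a transversal forbids every crude mechanism (one cannot, for instance, squeeze a positive--measure slab onto a plane in finite time), so the branching must come from a carefully tuned homoclinic/tangency geometry rather than from compression.

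For Part 1 I would set $a:=a_0$ and let $N\subset\mathbb{R}^3$ be the set of $x$ whose orbit meets $\Gamma$ at some time. Since $\Gamma$ is Lebesgue null in $\mathbb{R}^3$ and a.e.\ orbit is periodic through $D$, a Fubini argument along orbits together with measure preservation of $R$ gives $|N|=0$. For $x\notin N$ I define $\phi(t,x)$ to be the unique solution of \eqref{ODEWS}; for $x\in N$, I define $\phi$ by always choosing the continuation $R^+$ at each passage through $\Gamma$, and $\psi$ by always choosing $R^-$, with $\psi=\phi$ off $N$. Then $\phi,\psi$ both satisfy \eqref{ODEWS} for a.e.\ $x$; both are measure preserving for every $t$, being a.e.\ a composition of the measure preserving between--return flow with the measure preserving maps $R$, $R^\pm$; and each satisfies the group identity for a.e.\ $x$, every $t_1$, and every $t_2$ outside the countable set of switching instants of the orbit of $x$ --- exactly the exceptional set permitted in the statement. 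Finally, to get $\phi\neq\psi$ I would take $x$ in a positive--measure set whose orbit meets $\Gamma$ within time $T$ and check, from the geometry of $R^\pm$, that the two continuations genuinely separate the orbit and are not undone by the later dynamics, so that $\phi(t,\cdot)\neq\psi(t,\cdot)$ on a positive--measure set for a suitable $t$.

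For the transport assertion, given $f\in L^\infty(\mathbb{R}^3)$ I would set $u^\phi(t,x):=f(\phi(-t,x))$ and $u^\psi(t,x):=f(\psi(-t,x))$ for $t\ge0$. Using that $\phi$ and $\psi$ are measure preserving flows of $a$, a change of variables in $\int_0^\infty\!\!\int_{\mathbb{R}^3}u^\phi\big(\partial_t h+\langle a;\nabla_x h\rangle\big)\,dx\,dt$ together with \eqref{ODEWS} and the absolute continuity of $t\mapsto\phi(t,x)$ shows that $u^\phi,u^\psi$ are bounded weak solutions of $\partial_t u+\langle a;\nabla_x u\rangle=0$ with datum $f$ at $t=0$. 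Hence $u:=u^\phi-u^\psi\in L^\infty([0,\infty)\times\mathbb{R}^3)$ is a weak solution with $u(0,\cdot)=0$, and choosing $f$ to separate the images of $D$ under $R^+$ and $R^-$ makes $u\not\equiv0$, which proves the last assertion of Part 1.

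For Part 2 I would string together infinitely many modified gadgets in disjoint thickened tori accumulating at a point, but now with \emph{incompatible} branch data: the two continuations at the loci $\Gamma_k$ route an orbit into different later gadgets and/or have mismatched return times, so that any single valued, measurable, measure preserving continuation would be forced to satisfy an over--determined (cocycle--type) system of constraints across the $\Gamma_k$. The goal is to show that no measurable selection of the branches can meet these constraints while remaining injective a.e.\ (which measure preservation forces), hence that $\tilde a$ admits no measure preserving flow with the group property a.e.\ --- equivalently, that for every measure preserving $\phi$ solving \eqref{ODEWS} a.e.\ the identity $\phi_{t_1+t_2}=\phi_{t_1}\circ\phi_{t_2}$ fails on a non--countable set of $t_2$ for a positive--measure set of $x$. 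I expect this to be the hardest step: incompressibility blocks every volume--based obstruction, so the impossibility has to come entirely from the combinatorial/measure--theoretic incompatibility of the forced branch resolutions, and one must also control the accumulation of the $\Gamma_k$ so that a.e.\ orbit really meets infinitely many of them.
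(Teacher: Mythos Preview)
Your proposal contains a genuine and fatal gap in Part 1. You define $N$ as the set of points whose orbit meets the one--dimensional branch locus $\Gamma\subset D$, argue (correctly, given your setup) that $|N|=0$, and then set $\psi=\phi$ off $N$. But then $\phi$ and $\psi$ agree almost everywhere, so they are \emph{not} two distinct measure preserving flows in any meaningful sense. You notice the problem yourself two sentences later when you write ``take $x$ in a positive--measure set whose orbit meets $\Gamma$'': this directly contradicts $|N|=0$. A branching mechanism that is only visible to a null set of orbits cannot produce non--uniqueness of the flow.

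What you are missing is precisely the mechanism you dismissed. The paper does \emph{not} use homoclinic or tangency geometry; it builds an incompressible field whose induced map on a two--dimensional section genuinely \emph{collapses} one--dimensional fibers to points in finite time. Concretely, in the region $[0,1]^2\times(0,1]$ the third component of $a$ is $-1$ and the horizontal part is a time--dependent planar divergence--free field $b$ (built from iterated square and rectangle rotations \`a la Depauw) whose time--$1$ map $\chi^{(0)}(1,\cdot):(0,1)^2\to(0,1)^2$ sends each fiber $(0,1)\times\{x_2\}$ to a single point $\gamma(x_2)$, with $\gamma:(0,1)\setminus Z\to\big((0,1)\setminus Z\big)^2$ a measure--preserving \emph{bijection}. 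There is no violation of incompressibility: the three--dimensional flow preserves volume, and even the two--dimensional section map preserves area in the sense $|(\chi^{(0)}(1,\cdot))^{-1}(A)|=|A|$; it simply fails to be injective. Because of this collapsing, \emph{every} orbit in the support $S$ (not merely a null set) reaches the critical plane $[0,1]^2\times\{0\}$ at a point where it has merged with an entire fiber's worth of other orbits, and the two flows $\phi,\psi$ are obtained by choosing two different measure--preserving continuations past that plane (periodic repetition versus composition with an explicit involution $h$ coming from the reflection $(x_1,x_2)\mapsto(1-x_1,x_2)$). They differ on a set of positive measure. Your transport--equation argument via $u_0(\phi(-t,\cdot))$ and $u_0(\psi(-t,\cdot))$ is fine and matches the paper once one has genuinely distinct $\phi,\psi$.

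For Part 2 your plan (infinitely many gadgets with incompatible branch data accumulating at a point) is both vague and far more elaborate than needed. The paper simply replaces $a$ in a single region $A_7=[0,1]^2\times[-1,0)$ by $(0,0,-1)$, destroying the symmetry that allowed two continuations. Then, combining uniqueness of the regular flow in $A_1$ (where the field is BV) with the group property, any putative measure--preserving flow $\varphi$ is forced to satisfy $\chi^{(0)}(1,y_1,y_2)=(x_1,x_2)$ when tracing $x\in A_7$ backward through the critical plane; the collapsing of $\chi^{(0)}(1,\cdot)$ then makes $y_2$ determined but $y_1$ free, and a direct computation shows $\varphi(3/2,\cdot)$ sends a set of positive measure to a null set, contradicting measure preservation. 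No infinite concatenation or combinatorial obstruction is required.
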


The proof of the above result is inspired by \cite{Aizenman} and \cite{Depauw}.
The core idea is, as in \cite{Aizenman}, to construct a bounded divergence free vector field in $[0,1]^2\times (0,1]$ whose flow at some fixed time 
(here it will be $t=1$) collapses a large enough class of $1-$dimensional sets to points: That is, for $a.e.$ $x_2\in (0,1)$, the $x_1-$fiber $(0,1)\times \{x_2\}\times \{1\}$ is sent by the flow at time $1$ to a point in $(0,1)^2\times \{0\}.$ This will be done by following an argument in \cite{Depauw} using $2-$dimensional square and rectangle rotations: making use of such rotations we first exhibit a vector field whose flow at time $t=1/2$ sends, for $a.e.$ $x_2\in (0,1)$, the $x_1-$fiber $(0,1)\times \{x_2\}$ to a $x_1-$fiber of length $1/2$, then repeating the construction inductively (by scaling the geometry by a factor $1/2$) we finally obtain our desired vector field. Note also that a different  construction of a vector field with the same properties was done in \cite{C-L-R}.

Then, the vector field is extended to $\mathbb{R}^3$, so that, it remains bounded, divergence free, and has additionally compact support.

Using the above collapsing property we then construct, proceeding similarly as in \cite{Aizenman}, 
two distinct measure preserving flows $\phi$ and $\psi$ in $\mathbb{R}^3$ of our vector field which will be named $a$.
As a direct by-product we show that (as it would trivially be the case if $\phi$ and $\psi$ were smooth)
$u_0(\phi^{-1})$ and $u_0(\psi^{-1})$ both solve the linear transport equation with initial data $u_0$ and with drift term $a$. 
Choosing $u_0$ appropriately these two solutions are distinct which shows non-uniqueness for the transport equation.
Finally, by slightly modifying $a$,  we exhibit another vector field (with the same properties of $a$) for which there does not exist a measure preserving flow.

We stress on the fact that, all the bounded vector fields constructed in \cite{Aizenman} and  \cite{C-L-R}, resp. in
\cite{Depauw}, do not belong to $L^p(\mathbb{R}^3)$, resp. $L^p(\mathbb{R}^2)$, for any $p<\infty$ (and a fortiori are not bounded and with compact support).
Indeed, the vector fields \cite{Aizenman} and  \cite{C-L-R} are identically $(0,0,-1)$ in $(0,1)^2\times \big((-\infty,-1)\cup (1,\infty)\big)$ and the vector field constructed in \cite{Depauw} is periodic (with a square as period).

\begin{remark}
\label{remark.apres.theorme.supp.compact.R3} (i)
It is interesting to see that our vector fields $a$ and $\tilde{a}$ constructed below are moreover 
piecewise smooth in $\mathbb{R}^3\setminus([0,1]^2\times \{0\})$ (cf. Step 1.2 of the following proof).

(ii) Recall (proceeding for example by approximation) that, it always have existence of a (weak) bounded solution of the transport equation
$$\partial_t u+\langle a;\nabla_x u\rangle=0\quad \text{and} \quad u(0,\cdot)=u_0(\cdot)$$ when $a$ and $u_0$ are bounded.

(iii) As a direct consequence of the a.e. group property (cf. Step 6.3 in the proof below) we will also show that, for 
every $t\in \mathbb{R}$, $\phi(t,\cdot)$ and $\psi(t,\cdot)$ both are bijection from an open set of full 
measure in $\mathbb{R}^3$ onto an open set of full measure in $\mathbb{R}^3$ (depending on $t$) and that,
$$\phi(t,\cdot)^{-1}=\phi(-t,\cdot)\quad \text{and}\quad \phi(t,\cdot)^{-1}=\phi(-t,\cdot).$$
\end{remark}

\begin{proof} The proof is organized as follows. In the first 6 steps we establish the non uniqueness for the flow. 
In Step 7 we prove the non uniqueness for the transport equation. Finally in Step 8 we show the non existence part.

\textit{\underline{Step 1:} Definition of the vector field $a$ and its properties.}

\textit{\underline{Step 1.1.}}
The measurable and bounded vector field $a(x)= a(x_1,x_2,x_3)$,
with compact support and divergence free, will be first defined in the upper 
half space and then in the lower half space. For its definition we will use two vector fields exhibited in the appendix.

Define $a$ in $\{x_3\geq 0\}$ by
$$
a(x_1,x_2,x_3):=\left\{
  \begin{aligned}
  (b(1-x_3,x_1,x_2),-1)& \quad \text{in $A_1$,}
   \\[5pt]
   \frac{(0,x_3-1,-x_2-1)}
   {\sqrt{(x_3-1)^2+(x_2+1)^2}}& \quad \text{in $A_2$,}
   \\[5pt]
   \left(\frac{c(x_1-1/2,x_2+5/2)}{2},1\right) & \quad \text{in $A_3$,}
    \\[5pt]
   0 \quad \quad & \quad \text{in $\{x_3\geq 0\}\setminus (A_1\cup A_2\cup A_3)$,}
  \end{aligned}
 \right.
$$
where (cf. Figure 1)
\begin{align*}A_1&:=[0,1]^2\times (0,1]\\
A_2&:=\{(x_1,x_2,x_3)\in \mathbb{R}^3|\text{ }0\leq x_1\leq 1,1\leq \sqrt{(x_2+1)^2+(x_3-1)^2}\leq 2,x_3\geq 1\}\\
A_3&:=[0,1]\times [-3,-2]\times [0,1],
\end{align*}
$b:(-\infty,1)\times \mathbb{R}^2\rightarrow \mathbb{R}^2$ is the 2-dimensional vector field  defined in Lemma \ref{lemma.vector.field.b} and 
$c:\mathbb{R}^2\rightarrow \mathbb{R}^2$ is the 2-dimensional autonomous  vector field  defined in Lemma \ref{lemma.rotation.carre} (i).
We next define $a$ in $\{x_3<0\}$ as follows:
$$
a(x_1,x_2,x_3):=\left\{
  \begin{aligned}
  (0,0,-1)& \quad \text{in $A_4$,}
   \\[5pt]
   \frac{(0,x_3+2,-x_2-1)}
   {\sqrt{(x_3+2)^2+(x_2+1)^2}}& \quad \text{in $A_5$,}
   \\[5pt]
   \left(\frac{-c(x_1-1/2,x_2-1/2)}{2},-1\right) & \quad \text{in $A_6$,}
    \\[5pt]
    -R_3(a(R_3(x))) & \quad \text{in $A_7$,}
    \\[5pt]
   0 \quad \quad & \quad \text{in $\{x_3< 0\}\setminus (A_4\cup A_5\cup A_6\cup A_7)$,}
  \end{aligned}
 \right.
$$
where (cf. Figure 1) \begin{align*}
A_4&:=[0,1]\times [-3,-2]\times [-2,0]\\
A_5&:=\{(x_1,x_2,x_3)\in \mathbb{R}^3|\text{ }0\leq x_1\leq 1,1\leq \sqrt{(x_2+1)^2+(x_3+2)^2}\leq 2,x_3\leq -2\}\\
A_6&:=[0,1]^2\times [-2,-1]\\
A_7&:=[0,1]^2\times [-1,0)
\end{align*}
where $c$ is as before the vector field defined in Lemma \ref{lemma.rotation.carre} (i)
 and
where $$R_3(x_1,x_2,x_3):= (x_1,x_2,-x_3).$$
The definition of $a$ in $\{x_3<0\}$ might not appear to be the most natural one (one could have defined it by reflection everywhere in the lower half space e.g.); 
however with the definition the "period" of the flow of $a$ will be (contrary to the definition by reflection) independent of the position (cf. \eqref{period.indep.of.x}) which will significantly simplify some technical parts of the present proof.
\medskip

\textit{\underline{Step 1.2:} Properties of $a$.}
Let
$$S:=\cup_{i=1}^7A_i.$$
First, since $a\equiv 0$ outside $\overline{S}$ and $S$ is a bounded set,
the vector field $a$ has compact support (cf. Figure 1 for a representation of $a$).
Next, since from Lemma \ref{lemma.vector.field.b},
$b\in L^{\infty}([0,1)\times \mathbb{R}^2)$ and since (cf. Lemma \ref{lemma.rotation.carre}) $c$ is bounded, we directly get that $a$ in bounded in $\mathbb{R}^3$.
Using in particular the definition of the vector fields $b$ and $c$ we directly get that $a$ is piecewise smooth in $\mathbb{R}^3\setminus([0,1]^2\times \{0\})$: there exist countably pairwise disjoint open sets
$U_i$ with the following properties:
\begin{itemize}
\item $a$ is smooth in every $U_i$ and can be extended in a smooth way to $\overline{U}_i$
\item $\bigcup_i \overline{U}_i=\mathbb{R}^3\setminus([0,1]^2\times \{0\})$
\item for every $x\in \mathbb{R}^3\setminus([0,1]^2\times \{0\})$ we can find a neighbourhood of $x$
    intersecting only finitely many $\overline{U}_i's$.
\end{itemize}
In fact, except for finitely many i's, the $U_i's$ will be of the form $T_i\times I_i$ where $T_i$ is an 
open isosceles triangle in $\mathbb{R}^2$ and $T_i$ is an open interval in $\mathbb{R}.$

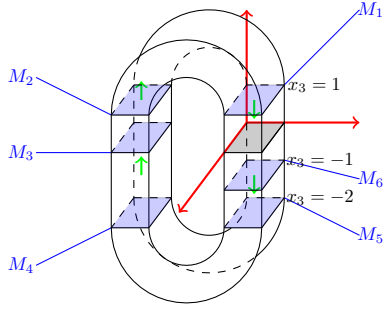
\begin{figure}
\begin{center}
\begin{tikzpicture}[scale=0.5]
\draw[blue](1,1)--(3,3);\draw[blue](1,-1)--(3,-1.5);\draw[blue](1,-2)--(3,-3);
\draw (3.3,3) node[scale=0.7,blue]{$M_1$};\draw (3.3,-1.5) node[scale=0.7,blue]{$M_6$};\draw (3.3,-3) node[scale=0.7,blue]{$M_5$};
\draw[blue](-3-3/5,1-4/5)--(-5-3/5,2-4/5);\draw[blue](-3-3/5,-4/5)--(-5-3/5,-4/5);\draw[blue](-3-3/5,-2-4/5)--(-5-3/5,-3-4/5);
\draw (-5-3/5-0.4,2-4/5) node[scale=0.7,blue]{$M_2$};\draw (-5-3/5-0.4,-4/5) node[scale=0.7,blue]{$M_3$};\draw (-5-3/5-0.4,-3-4/5) node[scale=0.7,blue]{$M_4$};
\draw (1.8,1) node[scale=0.7]{$x_3=1$};
\draw (1.95,-1) node[scale=0.7]{$x_3=-1$};
\draw (1.95,-2) node[scale=0.7]{$x_3=-2$};
\draw[arrows=->,thick,red](0,0)--(0,3);\draw[arrows=->,thick,red](0,0)--(3,0);
\draw[arrows=->,thick,red](0,0)--(-3*3/5,-3*4/5);
\draw[arrows=->,green,thick] (-2.5-3/10,1-4/10)--(-2.5-3/10,-4/10+1.5);
\draw[arrows=->,green,thick] (-2.5-3/10,-1-4/10)--(-2.5-3/10,-4/10+-0.5);
\draw[arrows=->,green,thick] (0.5-3/10,-4/10+1)--(0.5-3/10,0.5-4/10);
\draw[arrows=->,green,thick] (0.5-3/10,-4/10-1)--(0.5-3/10,-1.5-4/10);
\draw[dashed](0,0)--(0,-1);
\draw (-3/5,-4/5-1)--(-3/5,-4/5+1);\draw (1-3/5,-4/5-1)--(1-3/5,-4/5+1);
\draw (1,-1)--(1,1); \draw[dashed] (1,1)--(0,1);
\draw (1,1)--(1-3/5,-4/5+1);\draw (1,0)--(1-3/5,-4/5);
\draw (1,0)--(1-3/5,-4/5);\draw (1,-1)--(1-3/5,-4/5-1);
\draw (-3/5,-4/5)--(-3/5+1,-4/5);\draw (-3/5,-4/5+1)--(-3/5+1,-4/5+1);
\draw [dashed](-3/5,-4/5+1)--(0,1); \draw [dashed](0,-1)--(1,-1);
\draw[dashed] (0,-1)--(-3/5,-4/5-1);
\draw(-3/5,-4/5-1)--(-3/5+1,-4/5-1);
\draw [domain=0:180,dashed] plot ({-1+cos(\x)}, {1+sin(\x)});
\draw [domain=0:158] plot ({-1+2*cos(\x)}, {1+2*sin(\x)});
\draw [domain=158:180,dashed] plot ({-1+2*cos(\x)}, {1+2*sin(\x)});
\draw [domain=0:180] plot ({-1-3/5+cos(\x)}, {1-4/5+sin(\x)});
\draw [domain=0:180] plot ({-1-3/5+2*cos(\x)}, {1-4/5+2*sin(\x)});
\draw [domain=180:294] plot ({-1+cos(\x)}, {-2+sin(\x)});
\draw [domain=294:360,dashed] plot ({-1+cos(\x)}, {-2+sin(\x)});
\draw [domain=180:308,dashed] plot ({-1+2*cos(\x)}, {-2+2*sin(\x)});
\draw [domain=308:360] plot ({-1+2*cos(\x)}, {-2+2*sin(\x)});
\draw [domain=180:360] plot ({-1-3/5+cos(\x)}, {-2-4/5+sin(\x)});
\draw [domain=180:360] plot ({-1-3/5+2*cos(\x)}, {-2-4/5+2*sin(\x)});
\draw[dashed] (-3,1)--(-2,1);
\draw [dashed](-3,-2)--(-2,-2);
\draw[dashed] (-3,1)--(-3,-2);\draw (-2,1)--(-2,-2);
\draw [dashed](-3,1)--(-3-3/5,1-4/5);\draw (-2,1)--(-2-3/5,1-4/5);
\draw [dashed](-3,0)--(-3-3/5,0-4/5);\draw (-2,0)--(-2-3/5,-4/5);
\draw [dashed](-3,-2)--(-3-3/5,-2-4/5);\draw (-2,-2)--(-2-3/5,-2-4/5);
\draw [dashed](-3,0)--(-2,0);
\draw (-3-3/5,-4/5)--(-2-3/5,-4/5);
\draw (-3-3/5,1-4/5)--(-2-3/5,1-4/5);
\draw (-3-3/5,-2-4/5)--(-2-3/5,-2-4/5);
\draw (-3-3/5,1-4/5)--(-3-3/5,-2-4/5);\draw (-2-3/5,1-4/5)--(-2-3/5,-2-4/5);
\draw [dashed] (-3/5,-2-4/5)--(0,-2)--(1,-2);
\draw (-3/5,-4/5-2)--(-3/5+1,-4/5-2)--(1,-2);
\draw [dashed] (0,-1)--(0,-2);
\draw (1,-1)--(1,-2);\draw (-3/5+1,-4/5-1)--(-3/5+1,-4/5-2);\draw (-3/5,-4/5-1)--(-3/5,-4/5-2);
\draw[fill=black, opacity=0.2] (0,0)--(1,0)--(1-3/5,-4/5)--(-3/5,-4/5);
\draw[fill=blue, opacity=0.2] (0,1)--(1,1)--(1-3/5,1-4/5)--(-3/5,1-4/5);
\draw[fill=blue, opacity=0.2] (0,-1)--(1,-1)--(1-3/5,-1-4/5)--(-3/5,-1-4/5);
\draw[fill=blue, opacity=0.2] (0,-2)--(1,-2)--(1-3/5,-2-4/5)--(-3/5,-2-4/5);
\draw[fill=blue, opacity=0.2] (-3,0)--(-2,0)--(-2-3/5,-4/5)--(-3-3/5,-4/5);
\draw[fill=blue, opacity=0.2] (-3,1)--(-2,1)--(-2-3/5,1-4/5)--(-3-3/5,1-4/5);
\draw[fill=blue, opacity=0.2] (-3,-2)--(-2,-2)--(-2-3/5,-2-4/5)--(-3-3/5,-2-4/5);
\end{tikzpicture}
\caption{A representation of $a$:
The black subset at $\{x_3=0\}$ is $M_0=M_7$ while the blue subsets represent $M_1,\cdots M_6$ and are enumerated counter clockwise starting at $M_0$. For $i=1,\cdots 7,$ $A_i$ is the region delimited by $M_{i-1}$ and $M_i$:
$A_1=[0,1]^2\times (0,1]$ and so on until $A_7=[0,1]^2\times [-1,0).$
The whole donut (without the black subset) is the union of the $A_i$'s and is referred to as $S.$ The four green arrows represent roughly the direction $a$.}\end{center}
\end{figure}

\medskip
We now show that $\operatorname{div}(a)=0$ in $\mathbb{R}^3$ in the sense of distributions.
First since $b(t,\cdot)$ is divergence free in $(-1/2,1/2)^2$ for every $t\in [0,1)$  we directly get that
$\operatorname{div}a= 0$ in $A_1$ and in $A_7$.
Similarly, since (cf. Lemma \ref{lemma.rotation.carre}) $c$ is divergence free in $(-1/2,1/2)^2$ we get that
$\operatorname{div}a= 0$ in $A_3$ and in $A_6$.
Moreover, we trivially have that $\operatorname{div}a= 0$ in $A_2$, $A_3$ and $A_5.$
Next, noting the normal component of $a$ is continuous across every horizontal 
component of $\cup_{i=1}^7\partial A_i$ (of course the normal component of $a$ is the third component $a$ on such components)  we directly get that $\operatorname{div}a= 0$ in $S\cup \big((0,1)^2\times \{0\}\big).$
Finally since obviously $\operatorname{div}a= 0$ in $\mathbb{R}^3\setminus \{S\cup (0,1)^2\times \{0\}\}$, and since, using in particular Lemmas \ref{lemma.vector.field.b} and \ref{lemma.rotation.carre} (i),
the normal component of $a$ is zero (and hence continuous) across every not horizontal part of $\cup_{i=1}^7\partial A_i$ we get that $\operatorname{div}a=0$ in $\mathbb{R}^3$ as wished.

\medskip
\textit{\underline{Step 2:} Definition of a measure preserving flow of $a$ up to some positive and negative stopping times.}
In this step we prove that, for every $x\in S$, there exist some finite positive time $t^{+}(x)$ and some finite
negative time $t^{-}(x)$ and a measurable map $\varphi(t,x)$ defined for $t\in [t^{-}(x),t^{+}(x)]$  
with the following properties:\begin{itemize}\item Flow of $a$ in $S$:
for every $x\in S$
\begin{equation}
\label{varphi.flot.on.S}
   \varphi(t,x)= x + \int_0^ta(\varphi(s,x))ds
   \quad \text{for $t\in [t^{-}(x),t^{+}(x)],$}
\end{equation}
\begin{equation}\label{varphi.dans.S}
   \varphi(t,x) \in S \quad \text{for $t\in (t^-(x),t^+(x))$}
\end{equation} 
and
\begin{equation}\label{varphi.dans.plan.critique}
  \varphi(t^{\pm}(x),x) \in [0,1]^2\times \{0\}.
\end{equation}
\item Group property: for every $x \in S$ and $t_1,t_2\in \mathbb{R}$, such that
 $t_2\in (t^{-}(x),t^{+}(x))$ and $t_1+t_2\in [t^{-}(x),t^{+}(x)]$ we have
\begin{equation}
\varphi(t_1+t_2,x)= \varphi(t_1,\varphi(t_2,x))
\label{semi.group.varphi.x}
\end{equation} 
and
\begin{equation}
 \label{semi.group.varphi.time.t.pm}
   t^{\pm}(\varphi(t_2,x))= t^{\pm}(x)-t_2.
\end{equation}
\item Measure preservation:
 for every $t\in \mathbb{R}$  and every measurable set $U\subset S$, such that, $t\in (t^-(x),t^{+}(x))$ for every $x\in U$ then
\begin{equation}\label{varphi.preserve.measure}\varphi(t,\cdot)|_{U}:U\rightarrow \varphi(t,U)\text{ preserves the measure.}
\end{equation}
\item Local bijectivity:  for every $t\in \mathbb{R}$, and every set $U\subset S$, such that, $t\in (t^-(x),t^{+}(x))$ for every $x\in U$ then
\begin{equation}\label{varphi.injectivity}\varphi(t,\cdot)|_{U}:U\rightarrow \varphi(t,U)\quad\text{is bijective.}
\end{equation}
\end{itemize}
In words (cf. \eqref{varphi.dans.S} and \eqref{varphi.dans.plan.critique}) $t^{+}(x)$, resp. $t^{-}(x)$, is the smallest positive time, resp. the biggest negative time, after which the flow $\varphi(\cdot,x)$ reaches the plane $[0,1]^2\times\{0\}$
from above, resp. from below. Recall that, if $t \in (t^{-}(x),t^{+}(x))$ 
then $\varphi(t,x) \in S$ (and hence does not belong to $[0,1]^2 \times \{0\}$). 

\smallskip
The idea for the construction of $\varphi$ and $t^{\pm}$ is elementary: recalling that $S=\cup_{i=1}^7A_i$ we first exhibit, for $i=1,\cdots,7,$
times $t^{\pm}_{i}:A_i\rightarrow \mathbb{R}$ and a flow $\varphi$ in $A_i$ satisfying \eqref{varphi.flot.on.S},\eqref{semi.group.varphi.x}-\eqref{varphi.injectivity} (with $S$ replaced by $A_i$ and with $t^{\pm}$ replaced by $t^{\pm}_{i}$). See Figure 2 for an illustration of $t^{\pm}.$
Denoting (cf. Figure 1)
$$M_0= M_7:=[0,1]^2 \times \{0\}, \quad M_1:= [0,1]^2\times \{1\},\quad M_2:=[0,1]\times [-3,-2]\times \{1\}$$
$$M_3:=[0,1]\times [-3,-2]\times \{0\},\quad M_4:=[0,1]\times [-3,-2]\times \{-2\},\quad M_5:=[0,1]^2\times \{-2\}$$
$$M_6:=[0,1]^2\times \{-1\}.$$
we will also have that, for every $1\leq i\leq 7,$
\begin{equation}\label{varphi.i.dans.A.i}
   \varphi(t,x)\in A_i\setminus (M_{i-1}\cup M_i)\quad \text{for $t\in (t^{-}_{i}(x),t^{+}_{i}(x))$ and $x\in A_i$},
\end{equation}
\begin{equation}\label{varphi.i.t.pm.i}
   \varphi(t^{-}_{i}(x),x) \in M_i \quad \text{and} \quad \varphi(t^{+}_{i}(x),x) \in M_{i-1}\quad \text{for $x\in A_i$}.
\end{equation}
It will hence be possible to glue the orbits on $A_i$ and obtain our desired flow $\varphi$ as well as $t^{\pm}$.\smallskip

\begin{itemize}
\item Flow of $a$ in $A_1$: Define for every $x\in A_1$ and every $$t\in [x_3-1,x_3]=:[t^{-}_{1}(x),t^{+}_{1}(x)]$$
$$
    \varphi(t,x):=(\chi^{(1-x_3)}(t,x_1,x_2),x_3-t),
$$
where $\chi^{(1-x_3)}$ is the flow of $(t,x_1,x_2)\rightarrow b(t+1-x_3,x_1,x_2)$ exhibited in Lemma \ref{lemma.flow.of.b}. By the properties of $\chi^{(\cdot)}$ 
listed in Lemma \ref{lemma.flow.of.b}, it is a simple exercise to check that $\varphi$ satisfies \eqref{varphi.i.dans.A.i}, \eqref{varphi.i.t.pm.i} and
\eqref{varphi.flot.on.S}, \eqref{semi.group.varphi.x}-\eqref{varphi.injectivity} with $S$ replaced by $A_1$ and
$t^{\pm}$ replaced by $t^{\pm}_{1}$.\smallskip

\item Flow of $a$ in $A_2$. Define for every $x\in A_2$, writing $x=(x_1,r\cos(\theta)-1,r\sin(\theta)+1)$ with $r\in [1,2]$ and $\theta\in [0,\pi]$, and every $$t\in [r(\theta-\pi),r\theta]=:[t^{-}_{2}(x),t^{+}_{2}(x)],$$
$$\varphi(t,x):=(x_1,r\cos(\theta-t/r)-1,r\sin(\theta-t/r)+1).$$ 
It is elementary to check that $\varphi$ satisfies \eqref{varphi.i.dans.A.i}, \eqref{varphi.i.t.pm.i} and \eqref{varphi.flot.on.S}, \eqref{semi.group.varphi.x}-\eqref{varphi.injectivity} with $S$ replaced by $A_2$ and
$t^{\pm}$ replaced by $t^{\pm}_{2}.$
In particular, note that for every $x \in [0,1]^2\times \{1\}$, then $t^-_2(x)=-x_2-1$ and
\begin{equation}\label{formule.flow.A.2}
\varphi(-x_2-1;x)=(x_1,-x_2-2,1).
\end{equation}

\item Flow of $a$ in $A_3$: Define for every $x\in A_3$ and $$t\in [-x_3,1-x_3]=:[t^{-}_{3}(x),t^{+}_{3}(x)]$$
$$\varphi(t,x):=\big(\xi^c(t/2,x_1-1/2,x_2+5/2)+(1/2,-5/2),x_3+t\big)$$ where $\xi^c$ is the flow exhibited is Lemma \ref{lemma.rotation.carre}. It is easy to check that  $\varphi$ satisfies \eqref{varphi.i.dans.A.i}, \eqref{varphi.i.t.pm.i} and \eqref{varphi.flot.on.S}, \eqref{semi.group.varphi.x}-\eqref{varphi.injectivity} with $S$ replaced by $A_3$ and
$t^{\pm}$ replaced by $t^{\pm}_{3}$. In particular note that for every $x\in [0,1]\times [-3,-2]\times \{1\}$ then $t_3^-(x)=-1$ and
\begin{equation}\label{formule.flow.A.3}
\varphi(-1,x)=(-x_1+1,-x_2-5,0).
\end{equation}

\item Flow of $a$ in $A_4$: Define for every $x\in A_4$ and $$t \in [-x_3-2,-x_3] =:[t^{-}_{4}(x),t^{+}_{4}(x)]$$
$$\varphi(t,x):=(x_1,x_2,x_3+t).$$ Trivially, since $a=(0,0,1)$ in $A_4,$ $\varphi$ satisfies \eqref{varphi.i.dans.A.i}, \eqref{varphi.i.t.pm.i} and \eqref{varphi.flot.on.S}, \eqref{semi.group.varphi.x}-\eqref{varphi.injectivity} with $S$ replaced by $A_4$ and
$t^{\pm}$ replaced by $t^{\pm}_{4}$.\smallskip

\item Flow of $a$ in $A_5$. Define for every $x\in A_5$ writing $x= (x_1,r\cos(\theta)-1,r\sin(\theta)-2)$ with $r \in [1,2]$ and $\theta\in [\pi,2\pi]$, 
and every $$t\in [r(\theta-2\pi),r(\theta-\pi)]=:[t^{-}_{5}(x),t^{+}_{5}(x)],$$
$$\varphi(t,x):= (x_1,r\cos(\theta-t/r)-1,r\sin(\theta-t/r)-2).$$ 
As before it is elementary to check that $\varphi$ satisfies \eqref{varphi.i.dans.A.i}, \eqref{varphi.i.t.pm.i} and \eqref{varphi.flot.on.S}, \eqref{semi.group.varphi.x}-\eqref{varphi.injectivity} with $S$ replaced by $A_5$ and
$t^{\pm}$ replaced by $t^{\pm}_{5}.$
In particular note that for every $x\in [0,1]\times [-3,-2]\times \{-2\}$ then $t_5^{-}(x)=x_2+1$ and
\begin{equation}\label{formule.flow.A.5}
\varphi(x_2+1,x)= (x_1,-x_2-2,-2).
\end{equation}

\item Flow of $a$ in $A_6$: Define for every $x\in A_6$ and $$t\in [1+x_3,2+x_3]=:[t^{-}_{6}(x),t^{+}_{6}(x)]$$
$$\varphi(t,x):= \big(\xi^c(-t/2,x_1-1/2,x_2-1/2)+(1/2,1/2),x_3-t\big)$$ where $\xi^c$ is the flow exhibited is Lemma \ref{lemma.rotation.carre}. It is easy to check that  $\varphi$ satisfies \eqref{varphi.i.dans.A.i}, \eqref{varphi.i.t.pm.i} and \eqref{varphi.flot.on.S}, \eqref{semi.group.varphi.x}-\eqref{varphi.injectivity} with $S$ replaced by $A_6$ and
$t^{\pm}$ replaced by $t^{\pm}_{6}$. In particular note that for every $x\in [0,1]^2\times \{-2\}$ then $t_6^{-}(x)=-1$ and
\begin{equation}\label{formule.flow.A.6}
\varphi(-1,x)= (1-x_1,1-x_2,-1).
\end{equation}

\item Flow of $a$ in $A_7.$ Define for every $x\in A_7$ and every $$t\in[x_3,1+x_3]=:[t^{-}_{7}(x),t^{+}_{7}(x)],$$
\begin{equation}\label{formule.flow.A.7}\varphi(t,x):= R_3(\varphi(-t,R_3(x))).\end{equation}
Since $a$ has been defined by reflection on $A_7= R_3(A_1)$, i.e. $$a(x)=-R_3(a(R_3(x))),$$ 
combining Lemma \ref{lemma.reflection} and the flow constructed in $A_1$ we immediately get that
$\varphi$ satisfies \eqref{varphi.i.dans.A.i}, \eqref{varphi.i.t.pm.i} and \eqref{varphi.flot.on.S}, \eqref{semi.group.varphi.x}-\eqref{varphi.injectivity} with $S$ replaced by $A_7$ and
$t^{\pm}$ replaced by $t^{\pm}_{7}$.
\end{itemize}

Then, we naturally define $t^{\pm}$ as follows:
For $x\in A_i$, we set (cf. \eqref{varphi.i.t.pm.i}) $y_{i}^+(x)=\varphi(t^{+}_{i}(x),x) \in M_{i-1}$ and
$y_{i}^-(x)=\varphi(t^{-}_{i}(x),x) \in M_{i}$.
For every $1\leq j<i$ define by induction
$$y_{j}^{+}(x):=\varphi(t^{+}_{j}(y_{{j+1}}^{+}(x)),y_{{j+1}}^{+}(x))\in M_{j-1}$$ and similarly for every $i<l\leq 7$,
$$y_{l}^{-}(x):=\varphi(t^{-}_{l}(y_{{l-1}}^{-}(x)),y_{{l-1}}^{-}(x))\in M_{l}.$$
Then define
$$t^+(x):=t^{+}_{i}(x)+\sum_{1\leq j<i}t^{+}_{j}(y_{{j+1}}^{+}(x))$$ and
$$t^-(x):=t^{-}_{i}(x)+\sum_{i<l\leq 7}t^{-}_{l}(y_{{l-1}}^{-}(x)).$$
Finally, we obtain our desired $\varphi(t,x)$ for $x\in S$ and $t\in [t^{-}(x),t^{+}(x)]$  by gluing the orbits of the previously obtained flows on $A_i$.
Note in particular that \eqref{varphi.preserve.measure} is satisfied since $a$ is divergence free.
Note also that, since the third component of $\varphi(t,x)$ is $x_3-t$ for $x\in [0,1]^2\times (0,1]$ and $t\in [x_3-1,x_3]$, we directly get from \eqref{varphi.dans.plan.critique} that
\begin{equation}\label{t.plus.x.moins.1}
\varphi(t^{+}(x)-1,x)\in [0,1]^2\times \{1\}\quad \text{for every $x\in S.$}
\end{equation}

\textit{\underline{Step 3:} Additional properties of $\varphi$ and $t^{\pm}$}

\begin{itemize}
\item Recalling that $a$ is piecewise smooth in $\mathbb{R}^3\setminus ([0,1]^2\times \{0\})$ we get in particular $a\in BV(S)$. 
Hence (cf. \cite{Ambrosio}), $\varphi$ is the unique measure preserving flow (up to a null set) of $a$ in $S$.

\item Noting that $t^{\pm}_{A_i}$ is continuous in $A_i$ and does not depend of $x_1$ we deduce that the same holds for $t^{\pm}$ namely:
\begin{equation}\label{times.ne.dependent.pas.de.x1}
t^{\pm}\quad\text{does not depend on $x_1$ an is continuous on $S$.}
\end{equation}
Moreover it is easily checked that
\begin{equation}\label{t.plus.moins.1.measure}
|\{x\in S|\text{ }t^{+}(x)=t\}|=0\quad \text{for every $t\in \mathbb{R}$}.
\end{equation}

\item For every $x\in S$ we claim that
\begin{equation}\label{period.indep.of.x}
t^{+}(x)-t^-(x)=6+3\pi\end{equation}
and is hence independent of $x.$
Indeed using first \eqref{semi.group.varphi.time.t.pm} we get that for every $x\in S$
$$t^{+}(\varphi(t^{+}(x)-1,x))-t^-(\varphi(t^{+}(x)-1,x))= t^{+}(x)-t^{-}(x);$$
hence, using \eqref{t.plus.x.moins.1}, it is sufficient to prove to claim for $x\in [0,1]^2\times \{1\}=M_1.$
Then note that $x\in M_1$ is sent by $\varphi$ to $[0,1]^2\times \{0\}$ after a time $t=1$, hence
$t^{+}(x)=1.$ Next, using \eqref{formule.flow.A.2}, $x$ is sent by $\varphi$ to $(x_1,-x_2-2,1)\in M_2$ after a time $t=-\pi(x_2+1)$.
By \eqref{formule.flow.A.3} $\varphi$ sends then $(x_1,-x_2-2,1)$ to $(1-x_1,x_2-3,0)\in M_3$ after a time $-1$.
Trivially $(1-x_1,x_2-3,0)$ is sent by $\varphi$ to $(1-x_1,x_2-3,-2)\in M_4$ after a time $t=-2.$
Using \eqref{formule.flow.A.5} $\varphi$ sends $(1-x_1,x_2-3,-2)$ to $(1-x_1,1-x_2,-2)\in M_5$ after a $t=-\pi(2-x_2).$
From \eqref{formule.flow.A.6} $\varphi$ sends $(1-x_1,1-x_2,-2)$ to $(x_1,x_2,-1)\in M_6$ after a time $t=-1$
and finally $(x_1,x_2,-1)$ is sent by $\varphi$ in $[0,1]^2\times \{0\}=M_7$ after a time $t=-1.$
So at the end
$$t^-(x)=-\pi(x_2+1)-1-3-\pi(2-x_2)-1-1=-5-3\pi$$ and therefore
$$t^{+}(x)-t^{-}(x)=6+3\pi$$
as claimed.
Note that in particular it has been shown that for every $x\in M_1$
\begin{equation}\label{t.moins.plus.1}\varphi(t^{-}(x)+1;x)=(x_1,x_2,-1).\end{equation}

\item Periodicity of $\varphi:$ We claim that, for every $x\in S,$
\begin{equation}\label{varphi.extreme.equale}\varphi(t^{+}(x),x)= \varphi(t^{-}(x),x)\in [0,1]^2\times
\{0\}.\end{equation}
As before, using \eqref{semi.group.varphi.x} and \eqref{semi.group.varphi.time.t.pm}
we get that
$$\varphi(t^{+}(x),x)= \varphi(1,\varphi(t^{+}(x)-1,x)),$$ 
$$\varphi(t^{-}(x),x)= \varphi(t^{-}(x)-t^{+}(x)+1,\varphi(t^{+}(x)-1,x))$$
and
$$1=t^{+}(\varphi(t^{+}(x)-1,x))\quad \text{and}\quad t^{-}(x)-t^{+}(x)+1= t^{-}(\varphi(t^{+}(x)-1,x)).$$
Hence from \eqref{t.plus.x.moins.1}, it is enough to prove \eqref{varphi.extreme.equale} when $x\in [0,1]^2\times \{1\}.$ For such  $x$ we have 
$\varphi(t^{+}(x);x)=\varphi(1,x)$. Using \eqref{semi.group.varphi.x}, \eqref{t.moins.plus.1} and \eqref{formule.flow.A.7} we hence get, by definition of $\varphi$ in $A_7$ (cf. Step 2)
\begin{align*}\varphi(t^{-}(x),x)&=\varphi(-1,\varphi(t^{-}(x)+1,x))=\varphi(-1,x_1,x_2,-1)\\
&=\varphi(1,x_1,x_2,1)= \varphi(t^+(x),x)\end{align*}
as claimed.

\item Collapsing of $x_1-$fibers:
We claim that \begin{equation}\label{coll.2d}
\varphi(1,(0,1)\times \{x_2\} \times \{1\}) \quad \text{is a singleton in $(0,1)^2\times \{0\}$}
\end{equation} for every $x_2\in (0,1)\setminus Z$
where
$$Z:=\left\{\frac{j}{2^{i}}|\text{ }0\leq j\leq 2^i,i\geq 1\right\}.$$
It means that, except for countably many $x_2\in (0,1)$, $\varphi(1,\cdot)$ collapses the fiber $(0,1)\times
\{x_2\}\times \{1\}$ into a point in $(0,1)^2\times \{0\}$.
Indeed, by definition of $\varphi$ in $[0,1]^2\times (0,1]$ we have that
$$\varphi(1,(0,1)\times \{x_2\}\times \{1\})= (\chi^{(0)}(1,(0,1)\times \{x_2\}),0)$$ 
and we deduce the claim from \eqref{coll.2d} (cf. Figure 3 for an illustration of the action of $\varphi(1,\cdot)$).
\end{itemize}

\textit{\underline{Step 4:} A measure preserving map induced by $\varphi.$}
We claim that, the map $h:S\rightarrow S$ defined by
$$h(x):=\varphi(1-t^{+}(x),m(\varphi(t^{+}(x)-1,x)))$$ is well defined and measure preserving on $S,$
where
$$m(y_1,y_2,y_3):=(1-y_1,y_2,y_3)$$ and satisfies
\begin{equation}\label{h.involution}h\circ h=\operatorname{id}\quad \text{on $S.$}\end{equation}
In words the map $h$ does the following: it first sends $x$ to the set $[0,1]^2\times \{1\}$ by $\varphi(t^+(x)-1,\cdot)$ (cf. \eqref{t.plus.x.moins.1}).
It then does a reflection with respect to the set $\{x_1=1/2\}$ and then sends back the resulting point by $\varphi(1-t^+(x),\cdot)$.
First using \eqref{times.ne.dependent.pas.de.x1} and  \eqref{semi.group.varphi.time.t.pm} we get that
$$t^{+}\big(m(\varphi(t^{+}(x)-1,x))\big)= t^{+}(\varphi(t^{+}(x)-1,x))=1$$ and
$$t^{-}\big(m(\varphi(t^{+}(x)-1,x))\big)= t^{-}(\varphi(t^{+}(x)-1,x))= t^{-}(x)-t^{+}(x)+1$$ and hence
$$1-t^{+}(x)\in \left(t^{-}\big(m(\varphi(t^{+}(x)-1,x))\big),t^{+}\big(m(\varphi(t^{+}(x)-1,x))\big)\right)$$ 
implying (cf. \eqref{varphi.dans.S}) that $h(x)$ is well define and belongs to $S.$
Using again \eqref{semi.group.varphi.time.t.pm} and \eqref{times.ne.dependent.pas.de.x1} we get that
\begin{align}&t^+(h(x))= t^{+}(\varphi(1-t^{+}(x),m(\varphi(t^{+}(x)-1,x))))\nonumber\\
=&t^+(m(\varphi(t^{+}(x)-1,x)))-1+t^{+}(x)=t^+(\varphi(t^{+}(x)-1,x))-1+t^{+}(x)\nonumber\\
=&t^+(x)\label{t.hx.egal.t.x}.
\end{align}
Hence, using \eqref{semi.group.varphi.x} and \eqref{t.hx.egal.t.x}, we get, since trivially $m\circ m=\operatorname{id},$
\begin{align*}&h(h(x))=\varphi(1-t^+(x),
m(\varphi(t^+(x)-1,\varphi(1-t^+(x),m(\varphi(t^+(x)-1,x)))))\\
=&\varphi(1-t^+(x),m(m(\varphi(t^+(x)-1,x))))=\varphi(1-t^+(x),\varphi(t^+(x)-1,x))= x,
\end{align*}
showing \eqref{h.involution}.
It remains to show that $h$ is measure preserving on $S.$ 
For that, since (cf. \eqref{h.involution}) $h$ is a bijection from $S$ to $S$, 
it is enough to prove that, for every $1\leq i\leq 7,$ and every $x\in A_i,$
\begin{equation}\label{generic.form.h}h(x)=(l^{(i)}_{x_3}(x_1,x_2),x_3)
\end{equation}
for some measure preserving  map $l^{(i)}_{x_3}$ in $\mathbb{R}^2$.
\begin{itemize}
\item We first prove \eqref{generic.form.h} for $A_1.$
Recalling that, for $x \in A_1$ and $t\in [x_3-1,x_3],$ $\varphi(t,x)=(\chi^{(1-x_3)}(t,x_1,x_2),x_3-t)$,
where $\chi^{(\alpha)}(t,\cdot)$ is measure preserving in $[0,1]^2$ and that $t^{+}(x)=x_3$, we get
\begin{align*}
h(x)&=\varphi(1-x_3,m(\varphi(x_3-1,x)))=\varphi(1-x_3,m(\chi^{(1-x_3)}(x_3-1,x_1,x_2),1))\\
&=(\chi^{(0)}(1-x_3,m(\chi^{(1-x_3)}(x_3-1,x_1,x_2))),x_3),
\end{align*}
where, by abuse of notations, $m$ stands for $m(x_1,x_2)= (1-x_1,x_2)$ in second line of the previous equation.
This shows the claim.
\item
Since $a$ does not depend on $x_1$ in $A_2$ we directly get
(cf. the formula for $\varphi$ is Step 2) that
$$h(x)= m(x)= (1-x_1,x_2,x_3)\quad \text{for $x\in A_2$}$$
showing trivially the claim for $A_2$.
\item
For $x\in A_3$ since $\varphi(1-x_3,x)\in M_2\subset A_2$ we have (cf. the previous point) that $h(\varphi(1-x_3,x))=m(\varphi(1-x_3,x)).$
Hence, using \eqref{semi.group.varphi.x}, \eqref{semi.group.varphi.time.t.pm} and \eqref{times.ne.dependent.pas.de.x1},
\begin{align*}h(x)&=\varphi(1-t^{+}(x),m(\varphi(t^{+}(x)-1,x)))\\&= \varphi(x_3-1+1-x_3-1-t^{+}(x),m(\varphi(x_3-1+t^{+}(x)-1+1-x_3,x)))\\
&=\varphi(x_3-1,\varphi(1-x_3-1-t^{+}(x),m(\varphi(x_3-1+t^{+}(x)-1,\varphi(1-x_3,x)))))\\
&=\varphi(x_3-1,h(\varphi(1-x_3,x)))= \varphi(x_3-1,m(\varphi(1-x_3,x))).
\end{align*}
Hence, by definition of $\varphi$ in $A_3$ (cf. Step 2) and the fact that
 $\xi^c(\alpha,\cdot)$ and $m$ are measure preserving in $\mathbb{R}^2$, we obtain \eqref{generic.form.h} for $A_3$.
\item For $x\in A_4\cup A_5$ a simple calculation gives
$$h(x)=(1-x_1,x_2,x_3),$$ which yields trivially the claim.
\item Next for $x \in A_6$ proceeding similarly as for $A_3$ we get that
\begin{align*}h(x)&=\varphi(x_3+2,m(\varphi(-2-x_3,x)))\end{align*} and thus by definition of $\varphi$ in $A_6$ we get \eqref{generic.form.h} as for $A_3$.
\item Finally for $x\in A_7$ proceeding as for $x \in A_3$ we get that
\begin{align*}
h(x)=\varphi(1-x_3,m(\varphi(x_3-1,x)))
\end{align*}
and hence by definition of $\varphi$ in $A_7$ we deduce, as for $A_1$, the claim.
\end{itemize}

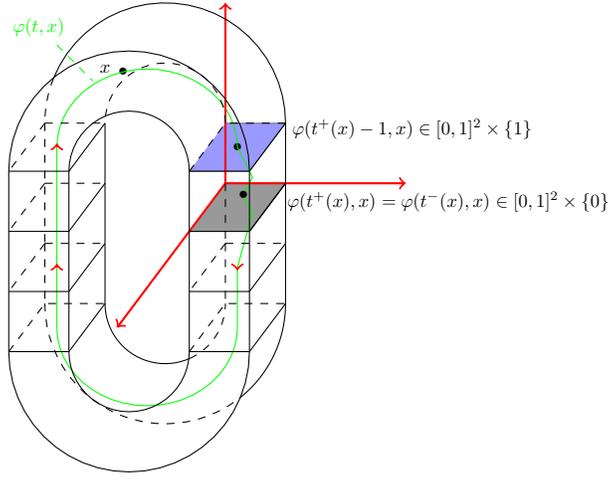
\begin{figure}
\begin{center}
\begin{tikzpicture}[scale=0.8]
\draw[fill=black, opacity=0.4] (0,0)--(1,0)--(1-3/5,-4/5)--(-3/5,-4/5);
\draw[fill=blue, opacity=0.4] (0,1)--(1,1)--(1-3/5,1-4/5)--(-3/5,1-4/5);
\draw (0.5-3/10,1-4/10) node[scale=0.7]{$\bullet$};
\draw (3.4-3/10,1.3-4/10) node[scale=0.7]{$\varphi(t^{+}(x)-1,x)\in [0,1]^2\times \{1\}$};
\draw (-2.0,1.9) node[scale=0.7]{$x$};\draw (-1.7,1.85) node[scale=0.7]{$\bullet$};

\draw [domain=0:180,green] plot ({-3/10-1+1.5*cos(\x)}, {1+-4/10+ 1.3*sin(\x)});
\draw [domain=180:360,green] plot ({-3/10-1+1.5*cos(\x)}, {-2+-4/10+ 1.3*sin(\x)});
\draw[green] (-3/10-2.5,1-4/10)--(-3/10-2.5,-2-4/10);
\draw[green](-3/10+0.5,1-4/10)--(-3/10+0.75,0.5-4/10)--(-3/10+0.6,0.2-4/10)--
(-3/10+0.75,-0.1-4/10)--(0.5-3/10,-1-4/10)--(0.5-3/10,-2-4/10);
\draw[arrows=->,thick,red](-3/10-2.5,-4/10+1)--(-3/10-2.5,-4/10+1);
\draw[arrows=->,thick,red](-3/10-2.5,-4/10-1)--(-3/10-2.5,-4/10-1);
\draw[arrows=->,thick,red](-3/10+0.5,-4/10-1)--(-3/10+0.5,-4/10-1.05);

\draw[green] (-3/10-2.8,3-4/10) node[scale=0.7]{$\varphi(t,x)$};
\draw[green,dashed](-3/10-2.8+0.3,3-4/10-0.3)--(-3/10-2.8+0.9,3-4/10-0.9);
\draw (-3/10+0.6,0.2-4/10) node[scale=0.7]{$\bullet$};\draw (-3/10+4,0.1-4/10)
node[scale=0.7]{$\varphi(t^+(x),x)=\varphi(t^-(x),x)\in [0,1]^2\times\{0\}$};
\draw[arrows=->,thick,red](0,0)--(0,3);\draw[arrows=->,thick,red](0,0)--(3,0);
\draw[arrows=->,thick,red](0,0)--(-3*3/5,-3*4/5);
\draw[dashed](0,0)--(0,-1);
\draw (-3/5,-4/5-1)--(-3/5,-4/5+1);\draw (1-3/5,-4/5-1)--(1-3/5,-4/5+1);
\draw (1,-1)--(1,1); \draw[dashed] (1,1)--(0,1);
\draw (1,1)--(1-3/5,-4/5+1);\draw (1,0)--(1-3/5,-4/5);
\draw (1,0)--(1-3/5,-4/5);\draw (1,-1)--(1-3/5,-4/5-1);
\draw (-3/5,-4/5)--(-3/5+1,-4/5);\draw (-3/5,-4/5+1)--(-3/5+1,-4/5+1);
\draw [dashed](-3/5,-4/5+1)--(0,1); \draw [dashed](0,-1)--(1,-1);
\draw[dashed] (0,-1)--(-3/5,-4/5-1);
\draw(-3/5,-4/5-1)--(-3/5+1,-4/5-1);
\draw [domain=0:180,dashed] plot ({-1+cos(\x)}, {1+sin(\x)});
\draw [domain=0:158] plot ({-1+2*cos(\x)}, {1+2*sin(\x)});
\draw [domain=158:180,dashed] plot ({-1+2*cos(\x)}, {1+2*sin(\x)});
\draw [domain=0:180] plot ({-1-3/5+cos(\x)}, {1-4/5+sin(\x)});
\draw [domain=0:180] plot ({-1-3/5+2*cos(\x)}, {1-4/5+2*sin(\x)});
\draw [domain=180:294] plot ({-1+cos(\x)}, {-2+sin(\x)});
\draw [domain=294:360,dashed] plot ({-1+cos(\x)}, {-2+sin(\x)});
\draw [domain=180:308,dashed] plot ({-1+2*cos(\x)}, {-2+2*sin(\x)});
\draw [domain=308:360] plot ({-1+2*cos(\x)}, {-2+2*sin(\x)});
\draw [domain=180:360] plot ({-1-3/5+cos(\x)}, {-2-4/5+sin(\x)});
\draw [domain=180:360] plot ({-1-3/5+2*cos(\x)}, {-2-4/5+2*sin(\x)});
\draw[dashed] (-3,1)--(-2,1);
\draw [dashed](-3,-1)--(-2,-1);
\draw[dashed] (-3,1)--(-3,-1);\draw (-2,1)--(-2,-1);
\draw [dashed](-3,1)--(-3-3/5,1-4/5);\draw (-2,1)--(-2-3/5,1-4/5);
\draw [dashed](-3,0)--(-3-3/5,0-4/5);\draw (-2,0)--(-2-3/5,-4/5);
\draw [dashed](-3,-1)--(-3-3/5,-1-4/5);\draw (-2,-1)--(-2-3/5,-1-4/5);
\draw [dashed](-3,0)--(-2,0);
\draw (-3-3/5,-4/5)--(-2-3/5,-4/5);
\draw (-3-3/5,1-4/5)--(-2-3/5,1-4/5);
\draw (-3-3/5,-1-4/5)--(-2-3/5,-1-4/5);
\draw (-3-3/5,1-4/5)--(-3-3/5,-1-4/5);\draw (-2-3/5,1-4/5)--(-2-3/5,-1-4/5);
\draw [dashed] (0-3/5,-2-4/5)--(0,-2)--(1,-2);
\draw (0-3/5,-2-4/5)--(1-3/5,-2-4/5)--(1,-2);
\draw (0-3/5,-2-4/5)--(0-3/5,-1-4/5);\draw (1-3/5,-2-4/5)--(1-3/5,-1-4/5);\draw (1,-2)--(1,-1);
\draw [dashed](0,-1)--(0,-2);
\draw [dashed] (-3-3/5,-2-4/5)--(-3,-2)--(-2,-2);
\draw (-3-3/5,-2-4/5)--(-2-3/5,-2-4/5)--(-2,-2);
\draw (-3-3/5,-2-4/5)--(-3-3/5,-1-4/5);\draw (-2-3/5,-2-4/5)--(-2-3/5,-1-4/5);\draw (-2,-2)--(-2,-1);
\draw [dashed](-3,-1)--(-3,-2);
\end{tikzpicture}
\caption{Definition of $t^{\pm}(x)$.}\end{center}
\end{figure}

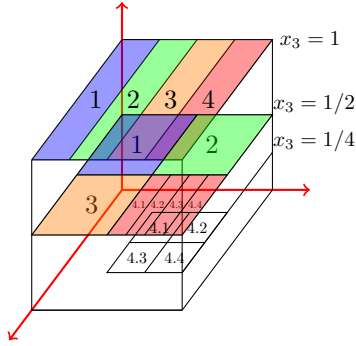
\begin{figure}
\begin{center}

\begin{tikzpicture}[scale=0.5]
\draw[arrows=->,thick,red](0,0)--(0,5);\draw[arrows=->,thick,red](0,0)--(5,0);
\draw[arrows=->,thick,red](0,0)--(-3,-4);
\draw (-4*3/5,-4*4/5)--(4+-4*3/5,-4*4/5)--(4,0)--(4,4)--(0,4);
\draw (1,4)--(1-4*3/5,4-4*4/5);
\draw (2,4)--(2-4*3/5,4-4*4/5);
\draw (3,4)--(3-4*3/5,4-4*4/5);
\draw[fill=blue, opacity=0.4] (0,4)--(1,4)--(1-4*3/5,4-4*4/5)--(-4*3/5,4-4*4/5);
\draw[fill=green, opacity=0.4] (1,4)--(2,4)--(2-4*3/5,4-4*4/5)--(1-4*3/5,4-4*4/5);
\draw[fill=orange, opacity=0.4] (2,4)--(3,4)--(3-4*3/5,4-4*4/5)--(2-4*3/5,4-4*4/5);
\draw[fill=red, opacity=0.4] (3,4)--(4,4)--(4-4*3/5,4-4*4/5)--(3-4*3/5,4-4*4/5);
\draw (0.5-2*3/5,4-2*4/5) node{$1$};\draw (1.5-2*3/5,4-2*4/5) node{$2$};
\draw (2.5-2*3/5,4-2*4/5) node{$3$};\draw (3.5-2*3/5,4-2*4/5) node{$4$};
\draw (1-3/5,2-4/5) node{$1$};\draw (3-3/5,2-4/5) node{$2$};
\draw (1-3*3/5,2-3*4/5) node{$3$};
\draw (-0.75+3-3*3/5,2-3*4/5) node[scale=0.4]{$4.1$};
\draw (-0.25+3-3*3/5,2-3*4/5) node[scale=0.4]{$4.2$};
\draw (0.25+3-3*3/5,2-3*4/5) node[scale=0.4]{$4.3$};
\draw (0.75+3-3*3/5,2-3*4/5) node[scale=0.4]{$4.4$};

\draw (-4*3/5,-4*4/5)--(4+-4*3/5,-4*4/5);
\draw (4+-4*3/5,-4*4/5)--(4+-4*3/5,4+-4*4/5);
\draw (-4*3/5,4-4*4/5)--(4+-4*3/5,4-4*4/5);
\draw (4+-4*3/5,4+-4*4/5)--(4,4);
\draw (-4*3/5,-4*4/5)--(-4*3/5,4+-4*4/5);
\draw(-4*3/5,4+-4*4/5)--(0,4);
\draw[fill=blue, opacity=0.4] (-2*3/5,2+-2*4/5)--(0,2)--(2,2)--(2-2*3/5,2+-2*4/5)--(-2*3/5,2+-2*4/5);
\draw[fill=green, opacity=0.4] (2-2*3/5,2+-2*4/5)--(2,2)--(4,2)--(4-2*3/5,2+-2*4/5)--(2-2*3/5,2+-2*4/5);
\draw[fill=red, opacity=0.4]
(2-4*3/5,2+-4*4/5)--(2-2*3/5,2-2*4/5)--(4-2*3/5,2-2*4/5)--(4-4*3/5,2-4*4/5)--(2-4*3/5,2+-4*4/5);
\draw[fill=orange, opacity=0.4]
(-4*3/5,2-4*4/5)--(-2*3/5,2-2*4/5)--(2-2*3/5,2-2*4/5)--(2-4*3/5,2-4*4/5)--(-4*3/5,2-4*4/5);
\draw (0,2)--(4,2)--(4-4*3/5,2-4*4/5)--(-4*3/5,2-4*4/5)--(0,2);
\draw (2,2)--(2-4*3/5,2-4*4/5);\draw (-2*3/5,2-2*4/5)--(4-2*3/5,2-2*4/5);
\draw (3.5-4*3/5,2-4*4/5)--(3.5-2*3/5,2-2*4/5);
\draw (3-4*3/5,2-4*4/5)--(3-2*3/5,2-2*4/5);
\draw (2.5-4*3/5,2-4*4/5)--(2.5-2*3/5,2-2*4/5);
\draw (4-4*3/5,1-4*4/5)--(2-4*3/5,1-4*4/5)--(2-2*3/5,1-2*4/5)--(4-2*3/5,1-2*4/5)--(4-4*3/5,1-4*4/5);
\draw (4-4*3/5,1-4*4/5)--(4,1);
\draw (5.1,1.3) node[scale=0.8]{$x_3=1/4$};
\draw (5.1,2.3) node[scale=0.8]{$x_3=1/2$};
\draw (5,4) node[scale=0.8]{$x_3=1$};
\draw (4-3*3/5,1-3*4/5)--(2-3*3/5,1-3*4/5);
\draw (3-4*3/5,1-4*4/5)--(3-2*3/5,1-2*4/5);
\draw (3.5-3.5*3/5,1-3.5*4/5) node[scale=0.6]{4.4};
\draw (2.5-3.5*3/5,1-3.5*4/5) node[scale=0.6]{4.3};
\draw (3.5-2.5*3/5,1-2.5*4/5) node[scale=0.6]{4.2};
\draw (2.5-2.5*3/5,1-2.5*4/5) node[scale=0.6]{4.1};
\end{tikzpicture}
\caption{The action of the flow $\varphi$ generated by $a$: the image of every rectangle $i$ (at $x_3=1$) is sent
by $\varphi(1/2,\cdot)$ to the corresponding square (at $x_3=1/2$). Similarly, $\varphi(1/2,\cdot)$ sends in
particular every rectangle $4.i$ at its corresponding square at height $1/4$.}\end{center}
\end{figure}

\textit{\underline{Step 5:} construction of two distinct flows for $a$.}
With the help of $\varphi$ we now construct two measure preserving distinct flows $\phi$ and $\psi$ of
$a$
where we recall that, for every $x\in S$, $\varphi(\cdot,x)$ is a measure preserving flow of $a$ defined on $[t^{-}(x),t^{+}(x)]$.
Using crucially the collapsing of fibers discussed in Step 3 we will show how to extend $\varphi(\cdot,x)$ outside
$[t^{-}(x),t^{+}(x)]$ in two distinct ways.
Let (see \eqref{period.indep.of.x}), for every $x\in S,$
$$T:=6+3\pi= t^+(x)-t^-(x),$$ which can be seen as the period of the orbit $\varphi(\cdot,x)$ recalling (cf.
\eqref{varphi.extreme.equale})
$$\varphi(t^{+}(x),x)=\varphi(t^{-}(x),x).$$
We first define $\phi$ by "periodicity":
$$ \phi(t,x):= \left\{\begin{array}{cl}
x &\text{for $t\in \mathbb{R}$ and $x\in \mathbb{R}^3\setminus S$}\\
 \varphi(t-kT,x) & \text{for $t\in\mathbb{R}$ and $x\in S$}
  \end{array}
   \right.
$$
where $k\in \mathbb{Z}$ is the unique integer such that
$$t-kT\in (t^-(x),t^+(x)].$$

The definition of $\psi$ is more involved. First we define the set $W\subset S$ by
$$W:=\{x\in S:\varphi(t^{+}(x)-1,x)\in (0,1)\times Z\times \{1\}\}.$$
Equivalently $W$ is the set of points $x$ in $S$ whose orbit $\varphi([t^-(x),t^{+}(x)],x)$ goes throw the set
$(0,1)\times Z\times \{1\}$.
Since $Z$ is countable and $\varphi$ is measure preserving we get that
$|W|=0.$
Next for every $x\in S\setminus W$ we claim that
\begin{equation}\label{collapsion.avec.tilde.x}
\varphi(t^{+}(x),x)= \varphi(t^{+}(x),h(x)),
\end{equation} where $h$ is the measure preserving map defined in Step 4.
Indeed, using \eqref{semi.group.varphi.x}, \eqref{t.plus.x.moins.1} and \eqref{coll.2d},
\begin{align*} \varphi(t^{+}(x),h(x))&=\varphi(t^{+}(x),\varphi(1-t^{+}(x),m(\varphi(t^{+}(x)-1,x))))\\&=
\varphi(1,m(\varphi(t^{+}(x)-1,x)))=\varphi(1,\varphi(t^{+}(x)-1,x))=\varphi(t^{+}(x),x).\end{align*}
We now define $\psi$ as follows:
$$ \psi(t,x):= \left\{\begin{array}{cl}
x &\text{for $t\in \mathbb{R}$ and $x\in \mathbb{R}^3\setminus S$}\\
\varphi(t-kT,x) & \text{for $t\in\mathbb{R}$ and $x\in W$}\\
\varphi(t,x) &\text{for $t\in [t^-(x),t^+(x)]$ and $x\in S\setminus W$}\\
\varphi(t-kT,h(x)) &\text{for $t\notin [t^-(x),t^+(x)]$ and $x\in S\setminus W$}
  \end{array}
   \right.
$$ where
as before, $k\in \mathbb{Z}$ is the unique integer such that
$$t-kT\in (t^-(x),t^+(x)].$$
Note that the previous definition makes sense since (cf. \eqref{t.hx.egal.t.x})
$$t^{\pm}\left(h(x)\right)=t^{\pm}(x).$$

See Figure 4  for an illustration of the orbits of $\phi$ and $\psi$: The green closed
curbed represents the image of $$\{\phi(t,x):t\in [t^{-}(x)-kT,t^{+}(x)-kT]\}\quad\text{for any $k\in
\mathbb{Z}$}$$
where $T=t^{+}(x)-t^{-}(x)$ is the "period" of the curb; the orange closed curbed represents the image of
$$\{\psi(t,x):t\in [t^{-}(x)-kT,t^{+}(x)-kT]\}\quad \text{for any $k\in \mathbb{Z}\setminus\{0\}$}$$ (for
$k=0$ it coincides with the green curbed).\smallskip

\begin{figure}
\begin{center}
\begin{tikzpicture}[scale=0.8]
\draw[fill=black, opacity=0.4] (0,0)--(1,0)--(1-3/5,-4/5)--(-3/5,-4/5);
\draw[fill=blue, opacity=0.4] (0,1)--(1,1)--(1-3/5,1-4/5)--(-3/5,1-4/5);
\draw[green,align=left] (-3/10+3.5,2.7-4/10) node[scale=0.7]{$\varphi(t^+(x)-1,x)$};
\draw[YellowOrange,align=left] (-3/10+3.7,1.7-4/10) node[scale=0.7]{$m(\varphi(t^+(x)-1,x))$};
\draw (-3/10+0.6,0.2-4/10) node[scale=0.7]{$\bullet$};

\draw (-0.8*3/5-2.5+3,1-0.8*4/5) node[scale=0.8]{$\bullet$};
\draw (-0.2*3/5-2.6+3,1-0.2*4/5) node[scale=0.8] {$\bullet$};
\draw[dashed](-0.8*3/5-2.5+3,1-0.8*4/5)--(-3/10+3.5,1.5-4/10);
\draw[dashed](-0.2*3/5-2.6+3,1-0.2*4/5)--(-3/10+3.5,2.5-4/10);
\draw (-2.0,2.05) node[scale=0.7]{$x$};\draw (-1.7,2) node[scale=0.7]{$\bullet$};
\draw (-1.6,1.45) node[scale=0.7]{$h(x)$};\draw (-2,1.5) node[scale=0.7]{$\bullet$};
\draw [domain=5:180,green,thick] plot ({-3/10-2.42+1.5+1.5*cos(\x)},{1.15-4/10+ 1.3*sin(\x)});
\draw [domain=-10:180,YellowOrange,thick] plot ({-3/10-0.5*3/5-2.35+1.5+1.5*cos(\x)},{1.15-0.5*4/5-4/10+
1.3*sin(\x)});
\draw [domain=180:360,green,thick] plot ({-3/10-2.42+1.5+1.5*cos(\x)},{-3+1.15-4/10+ 1.3*sin(\x)});
\draw [domain=183:360,YellowOrange,thick] plot ({-3/10-0.5*3/5-2.34+1.5+1.5*cos(\x)},{-3+1.15-0.5*4/5-4/10+
1.3*sin(\x)});
\draw[green,thick]({-3/10-2.42+1.5+1.5*cos(360)},{-3+1.15-4/10+ 1.3*sin(360)})--({-3/10-2.42+1.5+1.5*cos(360)},{1-3+1.15-4/10+ 1.3*sin(360)});
\draw[YellowOrange,thick]({-3/10-0.5*3/5-2.34+1.5+1.5*cos(360)},{-3+1.15-0.5*4/5-4/10+
1.3*sin(360)})--({-3/10-0.5*3/5-2.34+1.5+1.5*cos(360)},{1-3+1.15-0.5*4/5-4/10+
1.3*sin(360)});
\draw[green,thick]({-3/10-2.42+1.5+1.5*cos(5)},{1.15-4/10+ 1.3*sin(5)})--(-3/10+0.6,0.2-4/10);
\draw[YellowOrange,thick]({-3/10-0.5*3/5-2.35+1.5+1.5*cos(2)},{1.15-0.5*4/5-4/10+
1.3*sin(2)})--(-3/10+0.6,0.2-4/10);
\draw[green,thick](-3/10-2.42+1.5+1.5,1.15-4/10 -1)--(-3/10+0.6,0.2-4/10);
\draw[YellowOrange,thick]({-3/10-0.5*3/5-2.35+1.5+1.5*cos(360)},{-2+1.15-0.5*4/5-4/10+
1.3*sin(360)})--(-3/10+0.6,0.2-4/10);
\draw[green,thick]({-3/10-2.42+1.5+1.5*cos(360)},{-2+1.15-4/10+ 1.3*sin(360)})-- (-3/10+0.6,0.2-4/10);
\draw[green,thick] (-3/10-2.42,1.15-4/10)--(-3/10-2.42,-1-0.85-4/10);
\draw[YellowOrange,thick] (-3/10-0.4*3/5-2.39,-0.4*4/5+1.1-4/10)--(-0.4*3/5-3/10-2.39,-1-0.4*4/5-1-4/10);

\draw[green] (-3/10-2.8,3-4/10) node[scale=0.7]{$\phi(t,x)$};
\draw[green,dashed](-3/10-2.8+0.3,3-4/10-0.3)--(-3/10-2.8+0.9,3-4/10-0.9);
\draw[YellowOrange] (-0.3+1.5,1.2+1.5) node[scale=0.7]{$\psi(t,x)$};
\draw[YellowOrange,dashed](-0.3,1.2)--(-0.3+1.2,1.2+1.2);

\draw[arrows=->,thick,red](0,0)--(0,3);\draw[arrows=->,thick,red](0,0)--(3,0);
\draw[arrows=->,thick,red](0,0)--(-3*3/5,-3*4/5);
\draw[dashed](0,0)--(0,-1);
\draw (-3/5,-4/5-1)--(-3/5,-4/5+1);\draw (1-3/5,-4/5-1)--(1-3/5,-4/5+1);
\draw (1,-1)--(1,1); \draw[dashed] (1,1)--(0,1);
\draw (1,1)--(1-3/5,-4/5+1);\draw (1,0)--(1-3/5,-4/5);
\draw (1,0)--(1-3/5,-4/5);\draw (1,-1)--(1-3/5,-4/5-1);
\draw (-3/5,-4/5)--(-3/5+1,-4/5);\draw (-3/5,-4/5+1)--(-3/5+1,-4/5+1);
\draw [dashed](-3/5,-4/5+1)--(0,1); \draw [dashed](0,-1)--(1,-1);
\draw[dashed] (0,-1)--(-3/5,-4/5-1);
\draw(-3/5,-4/5-1)--(-3/5+1,-4/5-1);
\draw [domain=0:180,dashed] plot ({-1+cos(\x)}, {1+sin(\x)});
\draw [domain=0:158] plot ({-1+2*cos(\x)}, {1+2*sin(\x)});
\draw [domain=158:180,dashed] plot ({-1+2*cos(\x)}, {1+2*sin(\x)});
\draw [domain=0:180] plot ({-1-3/5+cos(\x)}, {1-4/5+sin(\x)});
\draw [domain=0:180] plot ({-1-3/5+2*cos(\x)}, {1-4/5+2*sin(\x)});
\draw [domain=180:294] plot ({-1+cos(\x)}, {-2+sin(\x)});
\draw [domain=294:360,dashed] plot ({-1+cos(\x)}, {-2+sin(\x)});
\draw [domain=180:308,dashed] plot ({-1+2*cos(\x)}, {-2+2*sin(\x)});
\draw [domain=308:360] plot ({-1+2*cos(\x)}, {-2+2*sin(\x)});
\draw [domain=180:360] plot ({-1-3/5+cos(\x)}, {-2-4/5+sin(\x)});
\draw [domain=180:360] plot ({-1-3/5+2*cos(\x)}, {-2-4/5+2*sin(\x)});
\draw[dashed] (-3,1)--(-2,1);
\draw [dashed](-3,-1)--(-2,-1);
\draw[dashed] (-3,1)--(-3,-1);\draw (-2,1)--(-2,-1);
\draw [dashed](-3,1)--(-3-3/5,1-4/5);\draw (-2,1)--(-2-3/5,1-4/5);
\draw [dashed](-3,0)--(-3-3/5,0-4/5);\draw (-2,0)--(-2-3/5,-4/5);
\draw [dashed](-3,-1)--(-3-3/5,-1-4/5);\draw (-2,-1)--(-2-3/5,-1-4/5);
\draw [dashed](-3,0)--(-2,0);
\draw (-3-3/5,-4/5)--(-2-3/5,-4/5);
\draw (-3-3/5,1-4/5)--(-2-3/5,1-4/5);
\draw (-3-3/5,-1-4/5)--(-2-3/5,-1-4/5);
\draw (-3-3/5,1-4/5)--(-3-3/5,-1-4/5);\draw (-2-3/5,1-4/5)--(-2-3/5,-1-4/5);

\draw (-3-3/5,1-4/5)--(-3-3/5,-1-4/5);\draw (-2-3/5,1-4/5)--(-2-3/5,-1-4/5);
\draw [dashed] (0-3/5,-2-4/5)--(0,-2)--(1,-2);
\draw (0-3/5,-2-4/5)--(1-3/5,-2-4/5)--(1,-2);
\draw (0-3/5,-2-4/5)--(0-3/5,-1-4/5);\draw (1-3/5,-2-4/5)--(1-3/5,-1-4/5);\draw (1,-2)--(1,-1);
\draw [dashed](0,-1)--(0,-2);
\draw [dashed] (-3-3/5,-2-4/5)--(-3,-2)--(-2,-2);
\draw (-3-3/5,-2-4/5)--(-2-3/5,-2-4/5)--(-2,-2);
\draw (-3-3/5,-2-4/5)--(-3-3/5,-1-4/5);\draw (-2-3/5,-2-4/5)--(-2-3/5,-1-4/5);\draw (-2,-2)--(-2,-1);
\draw [dashed](-3,-1)--(-3,-2);
\end{tikzpicture}
\caption{The two distinct flows $\phi$ and $\psi$ starting at a point $x\in W$.
}\end{center}
\end{figure}
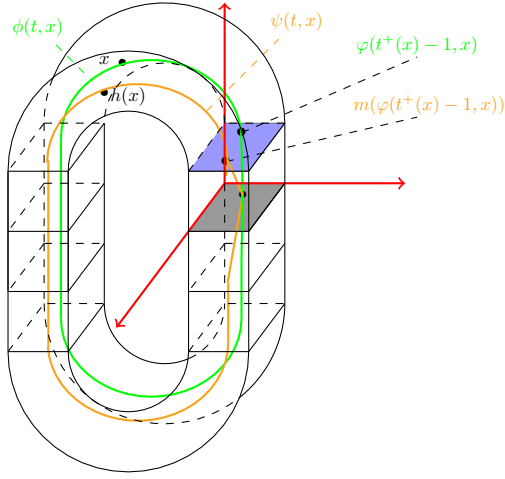

\textit{\underline{Step 6:} properties of $\phi$ are $\psi$.}\\
\textit{\underline{Step 6.1:} $\phi$ and $\psi$ are flows of $a.$}
First from \eqref{varphi.extreme.equale} we deduce that for every $x\in S$ the map $\phi(\cdot,x)$ is continuous
in $\mathbb{R}.$
Hence, recalling \eqref{varphi.flot.on.S}, we directly  get that
\begin{equation}\label{flow.phi}
\phi(t,x)=x+\int_0^t a(\phi(s,x))ds \quad \text{for every $x\in \mathbb{R}^3$ and $t\in \mathbb{R}.$}
\end{equation}
Similarly using \eqref{varphi.extreme.equale}, \eqref{semi.group.varphi.time.t.pm}, \eqref{semi.group.varphi.x}
and \eqref{collapsion.avec.tilde.x} we get that, $\psi(\cdot,x)$ is continuous in $\mathbb{R}.$
Hence, again by \eqref{varphi.flot.on.S},
we deduce  that
\begin{equation}\label{flow.psi}
\psi(t,x)=x+\int_0^t a(\psi(s,x))ds\quad \text{for every $x\in \mathbb{R}^3$ and $t\in \mathbb{R}.$}
\end{equation}

\textit{\underline{Step 6.2:} $\phi$ and $\psi$ satisfy the group property a.e. in $\mathbb{R}^3$.} Using \eqref{semi.group.varphi.x} and the definition of $\phi$ and $\varphi$ we easily get that for every $x\in S$, for every $t_1\in \mathbb{R}$ and for every
$t_2\in \mathbb{R}\setminus \{t^+(x)+nT:n\in \mathbb{Z}\},$
\begin{equation}\label{phi.psi.semi.group}\phi(t_1,\phi(t_2,x))=\phi(t_1+t_2,x)\quad \text{and}\quad \psi(t_1,\psi(t_2,x))=\psi(t_1+t_2,x).\end{equation}
Obviously \eqref{phi.psi.semi.group} is satisfied for every $x\notin S\cup ([0,1]^2\times\{0\})$ and every $t_1,t_2\in \mathbb{R}$ since in that case $\phi(\cdot,x)=\psi(\cdot,x)=x.$
At the end we have showed that for a.e. $x\in \mathbb{R}^3$, for every $t_1\in \mathbb{R}$ and for every $t_2\in \mathbb{R}$ except an at most countable set (depending of $x$) \eqref{phi.psi.semi.group} is satisfied.

\textit{\underline{Step 6.3:} $\phi(t,\cdot)$ and $\psi(t,\cdot)$ are bijections a.e. in $\mathbb{R}^3$.}
For every $t\neq 0$ define
$$Q_t:=\{x\in S|\text{ }t^{+}(x)\in t+\mathbb{Z}T\}.$$
and $$Q_0:=[0,1]^2\times \{0\}.$$
From \eqref{t.plus.moins.1.measure}, we directly get that
$|Q_t|=0.$
As a direct consequence of the group property established in Step 6.2, we deduce that, for every $t\in \mathbb{R}$,
$\phi(t,\cdot),\psi(t,\cdot)$ are both bijections from $\mathbb{R}^3\setminus (Q_{t}\cup Q_0)$ onto
$\mathbb{R}^3\setminus (Q_{-t}\cup Q_0)$ with
$$\phi(t,\cdot)^{-1}=\phi(-t,\cdot)\quad \text{and} \quad \psi(t,\cdot)^{-1}=\psi(-t,\cdot)\quad \text{in
$\mathbb{R}^3\setminus (Q_{-t}\cup Q_0)$}.$$
Note that using \eqref{varphi.dans.S}, \eqref{varphi.dans.plan.critique} and \eqref{collapsion.avec.tilde.x} we get from the definition of $\phi$ and $\psi$ that
$$Q_t=\{x\in S|\text{ }\phi(t,x)\in [0,1]^2\times \{0\}\}=\{x\in S|\text{ }0\psi(t,x)\in [0,1]^2\times \{0\}\}.
$$
\smallskip

\textit{\underline{Step 6.4:} $\phi(t,\cdot)$ and $\psi(t,\cdot)$  preserve the Lebesgue measure.}
We claim that for every $t\in \mathbb{R}$, $\phi(t,\cdot)$ and $\psi(t,\cdot)$ both preserve the Lebesgue measure
in $\mathbb{R}^3$.
We start with $\phi.$ As $\phi$ is a
bijection (cf. Step 6.3) from  $\mathbb{R}^3\setminus (Q_t\cup Q_0)$ onto $\mathbb{R}^3\setminus (Q_{-t}\cup Q_0)$ and
$|Q_t\cup Q_0|=0$, it is enough to show that for any $x\in \mathbb{R}^3\setminus (Q_t\cup Q_0)$ there exists a neighbourhood $U$ of $x$ such that $\phi(t,\cdot)|_U:U\rightarrow \phi(t,U)$  preserves the measure. We can assume that $x\in S$ otherwise the claim is trivial since $\phi(t,\cdot)$ is the identity on $\mathbb{R}^3\setminus (S\cup Q_t\cup Q_0)$. Then since $x\notin Q_t$ we have by definition that $t\notin t^{+}(x)+\mathbb{Z}T$. Hence by continuity of $t^{\pm}$ (cf. \eqref{times.ne.dependent.pas.de.x1}) there exist a neighbourhood $U$ of $x$ in $S$ and $k\in \mathbb{Z}$ such that
$$t-kT\in (t^{-}(y),t^{+}(y))\quad \text{for every $y$ in $U.$}$$
Since then by definition of $\phi$ we have, for every $y\in U$,
$$\phi(t,y)=\varphi(t-kT,y)$$ we conclude by \eqref{varphi.preserve.measure} that $\phi(t,\cdot)|_U:U\rightarrow \phi(t,U)$  preserves the measure.

We now deal with $\psi.$ Exactly as before it is enough to prove, for any $x\in S\setminus (Q_t\cup Q_0)$, the existence of the neighbourhood $U$ of $x$ in $S$ such that $\psi(t,\cdot)|_U:U\rightarrow \psi(t,U)$  preserves the measure.
Again exactly as before we can find a neighbourhood $U$ of $x$ in $S$ and $k\in \mathbb{Z}$ such that
$$t-kT\in (t^{-}(y),t^{+}(y))\quad \text{for every $y$ in $U.$}$$
If $k=0$ we are done using \eqref{varphi.preserve.measure} since then, by definition of $\psi$, for every $y\in U,$
$$\psi(t,y)=\varphi(t,y).$$
We can therefore assume that $k\neq 0.$ In that case, by definition of $\psi$, we have, for every $y\in U\setminus W$
$$\psi(t,y)=\varphi(t-kT,h(y)).$$
Since $h$ and $\varphi(t-kT,\cdot)$ are measure preserving we get that, using \eqref{t.hx.egal.t.x}, the map
$y\rightarrow \varphi(t-kT,h(y))$ preserves the measure in $U.$
Since $\psi$ and $y\rightarrow \varphi(t-kT,h(y))$ only differ on the null set $W$ we  get that $\psi(t,\cdot)$ preserves as well the measure in $U.$\smallskip

\textit{\underline{Step 6.5:} $\phi$ and $\psi$ differ on a set of positive Lebesgue measure.}
By definition of $\varphi$ in $A_2$ (cf. Step 2) we easily see that, for every $x\in A_2\cap \{x_2\leq -1\}$ and $t\in [0,\pi]$,
$$\varphi(t,x)\in A_2\quad \text{and the first component of $\varphi(t,x)$ is simply $x_1.$}$$
Moreover for every $x\in A_2$ recall that (cf. Step 4) $h(x)=(1-x_1,x_2,x_3).$
Hence, by definition, for every $x\in (A_2\cap \{x_2\leq -1\})\setminus W$ and $t\in [T,T+\pi]$, as $t-T\in (t^-(x),t^+(x)]$ we have
$$\phi(t,x)=\varphi(t-T,x)\quad \text{and}\quad \psi(t,x)=\varphi(t-T,h(x))$$
and therefore
\begin{equation}\label{psi.not.egal.phi}
\phi^1(t,x)=x_1\quad \text{and}\quad 1-x_1= \psi^1(t,x).
 \end{equation}
Since $|W|=0$ the previous equation shows in particular that $\phi$ and $\psi$ differ on a set with positive
Lebesgue measure in $\mathbb{R}^4$.\smallskip

Combining Steps 6.1, 6.2 and 6.3 and 6.5 we have proved the existence of two distinct measure preserving flows of $a$ satisfying the group property a.e..\smallskip

\textit{\underline{Step 7:} Non uniqueness for the transport equation.}
Let $u_0\in C^{\infty}_c(\mathbb{R}^3)$.
We claim that $v,w\in L^{\infty}([0,\infty)\times \mathbb{R}^3)$ defined by
$$v(t,x):= u_0(\phi(-t,x))\quad \text{and}\quad w(t,x):= u_0(\psi(-t,x))$$ both solve
$$\partial_tu+\langle b; \nabla_x u\rangle=0\quad \text{and}\quad u(0,\cdot)=u_0(\cdot),$$
in the weak sense.
We will only prove it for $v$ the proof for $w$ being exactly identical.
We have to prove that for every $h\in C^{\infty}_c([0,\infty)\times \mathbb{R}^3)$
\begin{align*}
\int_0^{\infty}\int_{\mathbb{R}^3}-v(t,x)\left(\partial_th(t,x)-\langle a;\nabla_x h(t,x)\rangle\right)dxdt
=\int_{\mathbb{R}^3} u_0(x)h(0,x)dx.
\end{align*}
Now since $\phi(t,\cdot)$  preserves the
Lebesgue measure (cf. Step 6.4)
and since (cf. \eqref{flow.phi}), for a.e $x\in \mathbb{R}^3$ the map
$$t\rightarrow \phi(t,x)$$ is Lipschitz on $\mathbb{R}$ with derivative $a(\phi(t,x))$, we get, for $h\in C^{\infty}_c([0,\infty)\times \mathbb{R}^3),$
\begin{align*}
&\int_0^{\infty}\int_{\mathbb{R}^3}-v(t,x)\left(\partial_t h(t,x)
+\langle a;\nabla_x h(t,x)\rangle\right)dxdt\\
=&\int_0^{\infty}\int_{\mathbb{R}^3}-u_0(x)\left(\partial_t h(t,(\phi(t,x)))+
\langle a(\phi(t,x)),\nabla_x h(t,\phi(t,x))\rangle\right)dxdt\\
=&\int_{\mathbb{R}^3}\int_{0}^{\infty}-u_0(x)\left(\partial_t h(t,(\phi(t,x)))+
\langle a(\phi(t,x));\nabla_x h(t,\phi(t,x))\rangle\right)dtdx\\
=&\int_{\mathbb{R}^3}\int_{0}^{\infty}-u_0(x)\frac{d}{dt}\left(h(t,\phi(t,x))\right)dtdx\\
=&\int_{\mathbb{R}^3}u_0(x)h(0,x)dx
\end{align*}
which proves the claim.
Finally choose $u_0$ as a smooth function with compact support such that $u_0(x)=x_1$ in $S.$
Then using \eqref{psi.not.egal.phi} we get that $v-w$ is  different from $0$ on a set of positive Lebesgue measure
set and solves \eqref{theorem.supp.comp.R.3}, which proves the second part of the theorem and concludes the proof.\smallskip

\textit{\underline{Step 8:} Non existence of a flow.}
First we define our vector field $\tilde{a}$ as follows:
$$\tilde{a}=\left\{\begin{array}{cl}
a&\text{in $\mathbb{R}^3\setminus A_7$}\\
(0,0,-1)&\text{in $A_7$}.
\end{array}
\right.
$$
Proceeding as in Step 1.2, we see that $\tilde{a}$ is measurable bounded, has compact support and is divergence free in $\mathbb{R}^3.$ Moreover it is piecewise smooth in $\mathbb{R}^3\setminus \{[0,1]^2\times \{0\}\}$
We now establish that no map $\varphi:\mathbb{R}\times \mathbb{R}^3 \rightarrow \mathbb{R}^3$ satisfies
\begin{equation}\label{flow.a.tilde}\varphi(t,x)=x+\int_0^t\tilde{a}(\varphi(s,x))ds\quad \text{for a.e. $x\in \mathbb{R}^3$ and for every $t\in \mathbb{R}$},\end{equation}
satisfies the group property a.e. and is such that
$$\varphi(t,\cdot)\quad \text{is measure preserving for every $t\in \mathbb{R}$}.$$
We proceed by contradiction and assume that such a $\varphi$ exists.
First, since $\tilde{a}$ is piecewise smooth in $\mathbb{R}^3\setminus ([0,1]^2\times \{0\})$ and thus in particular  $\tilde{a}$ belongs to $BV([0,1]^2\times (0,1])$, we have (cf. \cite{Ambrosio}) that $\varphi$ is uniquely determined (up to a null set) in $[0,1]^2\times (0,1]$. Hence since $a=\tilde{a}$ in $A_1=[0,1]^2\times (0,1]$ we get (cf. Step 2)
that, necessarily, for a.e. $x\in A_1$
\begin{equation}\label{equation.varphi.dans.cube.sup}\varphi(t,x)=(\xi^{(1-x_3)}(t,x_1,x_2),x_3-t)\quad \text{for $t\in [x_3-1,x_3]$}.\end{equation}
Next, since $a=(0,0,-1)$ in $[0,1]^2\times [-1,0)=A_7$, we obviously get that, for every $x\in A_7$
\begin{equation}\label{equation.varphi.dans.cube.inf}\varphi(t,x)=x-t(0,0,1)\quad \text{for $t\in [x_3,1+x_3]$}.\end{equation}
Also, since the third component in identically $-1$ in $A_1\cup A_7$, we trivially obtain that
\begin{equation}\label{equation.3.ieme.comp}\varphi^3(t,x)=x_3-t\quad \text{for every $x\in [0,1]^2\times [-1,1]$ and $t\in [x_3-1,x_3+1]$}.\end{equation}
Now by the group property, we get that for a.e. $x\in A_7$ and $t\in [0,1]$
\begin{equation}\label{equation.group.property}\varphi(t,\varphi(x_3-1,x))=\varphi(t+x_3-1,x).\end{equation}
Combining \eqref{equation.group.property},  \eqref{equation.3.ieme.comp} and \eqref{equation.varphi.dans.cube.sup}, we get that, for a.e. $x\in A_7$ and $t\in [0,1]$
$$\varphi(t+x_3-1,x)=\varphi(t,(y_1,y_2,1))=(\xi^{(0)}(t,y_1,y_2),t)$$
for some $(y_1,y_2)\in [0,1]^2.$
By continuity of $\varphi(\cdot,x)$, combining the previous equation and \eqref{equation.varphi.dans.cube.inf}
we must have
$$\xi^{(0)}(1,y_1,y_2)=(x_1,x_2).$$
Hence, for a.e. $(x_1,x_2)\in ((0,1)\setminus Z)^2$, by  \eqref{gamma.bijection} and \eqref{chi.0.bord.sur.bord}, $y_2$ is the unique number in $(0,1)\setminus Z$ such that $\gamma(y_2)=(x_1,x_2)$ while $y_1\in (0,1)$ can be chosen arbitrarily.

Summarizing, we obtained that, for a.e. $x\in ((0,1)\setminus Z)^2\times [-1,0)$,
$\varphi(t,(x_1,x_2,x_3))$ has necessarily the following form
\begin{equation}\label{formula.for.varphi.non.exist}
\varphi(t,x)=\left\{\begin{array}{cl}
x-t(0,0,1)&\text{for $t\in [x_3,1+x_3]$}\\
(\chi^{(0)}(1+x_3-t,y_1,y_2),x_3-t),&\text{for $t\in [x_3-1,x_3]$}
\end{array}
\right.
\end{equation}
for some $y_1=y_1(x_1,x_2)\in (0,1)$ and where $y_2=y_2(x_1,x_2)\in (0,1)\setminus Z$ is the unique real number such that
$$\gamma(y_2)=(x_1,x_2).$$
We now claim that that
\begin{equation}\label{not.measure.preserving}|\varphi(3/2, ((0,1)\setminus Z)^2\times [-1,-1/2]|=0\end{equation} which implies that $\varphi(3/2,\cdot)$ is not measure preserving whence a contradiction.
From the special structure of the third component of $\varphi$ (cf. \eqref{equation.3.ieme.comp}) \eqref{not.measure.preserving} will be proved once showed that,
for every $x_3\in [-1,-1/2]$ the set
$$M_{x_3}:=\{(\varphi^1(3/2,x_1,x_2,x_3),\varphi^2(3/2,x_1,x_2,x_3)):\text{ $(x_1,x_2)\in ((0,1)\setminus Z)^2$}$$ is a two dimensional null set.
First note that, using \eqref{formula.for.varphi.non.exist},
$$M_{x_3}=\cup_{(x_1,x_2)\in ((0,1)\setminus Z)^2}\{\chi^{(0)}(-x_3-1/2)(y_1(x_1,x_2),y_2(x_1,x_2))\}.$$
Since $\chi^{(0)}(\lambda,\cdot)$ is measure preserving it is enough to show that
$$\cup_{(x_1,x_2)\in ((0,1)\setminus Z)^2}\{(y_1(x_1,x_2),y_2(x_1,x_2))\}$$ is a two dimensional null set.
The latter is obvious since  $(x_1,x_2)\rightarrow y_2(x_1,x_2)$ is one-to-one.
 \end{proof}

In the previous proof we used the following elementary lemma whose proof is omitted.
\begin{lemma} \label{lemma.reflection}
Let $a:\{x_3>0\}\rightarrow \mathbb{R}^3$ be bounded and measurable.
Extend $a$ to $\{x_3<0\}$ by
$$a(x):=-R_3(a(R_3(x))=(-a^1(x_1,x_2,-x_3),-a^2(x_1,x_2,x_3),a^3(x_1,x_2,-x_3)),$$
where $R_3(x_1,x_2,x_3):=(x_1,x_2,-x_3).$
Suppose that for some $x\in \{x_3>0\}$ there exists a map $\varphi(\cdot,x)\in \{x_3\geq 0\}$ defined on $[t_1,t_2]$ with $t_1<t_2$ satisfying
\begin{equation}\label{flow.lemme}\varphi(t,x)=x+\int_0^ta(\varphi(s,x))ds\quad\text{for every $t\in
[t_1,t_2]$}.\end{equation}
Then for $y:=R_3(x)\in \{x_3<0\}$ the map
$$\varphi(t,y):=R_3(\varphi(-t,R_3(y)))\quad \text{$t\in [-t_2,-t_1]$}$$ satisfies
$$\varphi(t,y)=y+\int_0^ta(\varphi(s,y))ds\quad\text{for every $t\in [-t_2,-t_1].$}$$
\end{lemma}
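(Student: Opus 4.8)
The plan is to verify the integral identity by a direct computation, exploiting two facts only: that $R_3$ is a \emph{linear} involution (so it commutes with integration and satisfies $R_3\circ R_3=\operatorname{id}$), and that the vector field on $\{x_3<0\}$ is \emph{by definition} the reflection $a(z)=-R_3(a(R_3(z)))$.

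First I would introduce, for $t\in[-t_2,-t_1]$, the curve
$$\psi(t):=R_3(\varphi(-t,x)),$$
and observe that $R_3(y)=R_3(R_3(x))=x$, so that $\psi(t)=R_3(\varphi(-t,R_3(y)))$ is exactly the map $\varphi(\cdot,y)$ appearing in the statement. Since $\varphi(\cdot,x)$ takes values in $\{x_3\ge 0\}$, the reflected curve $\psi$ takes values in $\{x_3\le 0\}$, which is what will make the extension formula for $a$ applicable along $\psi$.

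Next, I would apply $R_3$ to \eqref{flow.lemme} evaluated at $-t$, pull $R_3$ inside the integral by linearity, and perform the change of variable $s=-\sigma$ to obtain
$$\psi(t)=R_3(x)+\int_0^{-t}R_3\big(a(\varphi(s,x))\big)\,ds=y-\int_0^{t}R_3\big(a(\varphi(-\sigma,x))\big)\,d\sigma.$$
For $\sigma$ with $\varphi(-\sigma,x)\notin\{x_3=0\}$ one has $\varphi(-\sigma,x)=R_3(\psi(\sigma))$ with $\psi(\sigma)\in\{x_3<0\}$, so the defining relation for the extension gives $a(\psi(\sigma))=-R_3(a(R_3(\psi(\sigma))))=-R_3(a(\varphi(-\sigma,x)))$, i.e. $R_3(a(\varphi(-\sigma,x)))=-a(\psi(\sigma))$. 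Substituting this into the display above yields
$$\psi(t)=y+\int_0^{t}a(\psi(\sigma))\,d\sigma,$$
which, since $\psi(t)=\varphi(t,y)$, is precisely the claimed identity.

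The only point deserving a word of care is the set of times at which the trajectory lies on the hyperplane $\{x_3=0\}$, where $a$ is not prescribed by the reflection rule; but since $\varphi(\cdot,x)$ is absolutely continuous (and, in the situations where this lemma is applied, its third coordinate is strictly monotone) this is a Lebesgue-null set of times and hence irrelevant to the integrals above. So no genuine obstacle arises, and the argument is pure bookkeeping with the involution $R_3$.
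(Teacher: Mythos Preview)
Your proof is correct and is exactly the natural direct verification by unwinding the definitions; the paper itself omits the proof of this lemma, calling it elementary, so your argument is precisely the routine computation the authors had in mind.
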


\section{The non autonomous case}

We now establish the two dimensional (non autonomous) version of Theorem \ref{theorem.supp.comp.R.3}.

\begin{theorem}\label{theorem.supp.comp.R.2.t}
 \textbf{Part 1: Non uniqueness.} There exists a 
 compactly supported
 vector field $a \in L^\infty(\mathbb{R}\times \mathbb{R}^2,\mathbb{R}^2)$, 
 such that, $a(t,\cdot)$ is divergence free in $\mathbb{R}^2$ for a.e. $t\in \mathbb{R}$ 
generating two distinct measure preserving flows satisfying the group property.
More precisely, it will be shown the existence of two distinct maps
$\phi,\psi:\mathbb{R}\times \mathbb{R}\times \mathbb{R}^2\rightarrow \mathbb{R}^2$
satisfying, for every $t\in \mathbb{R}$, every $\alpha\neq 1$ and every $x\in \mathbb{R}^2$,
$$
    \phi(t,\alpha,x)= x + \int_0^t a(s+\alpha,\phi(s,\alpha,x)) \ ds,
    \quad \psi(t,\alpha,x)= x + \int_0^t a(s+\alpha,\psi(s,\alpha,x)) \ ds,
$$ 
such that $\phi(t,\alpha,\cdot)$ and $\psi(t,\alpha,\cdot)$ both preserve the Lebesgue
measure for every $\alpha\neq 1$ and $t \in \mathbb{R}$ and such that,
for every $t_1,t_2,\alpha\in \mathbb{R}$ with $\alpha\neq 1$ and $t_2+\alpha\neq 1,$
$$
    \phi(t_1,\alpha+t_2,\phi(t_2,\alpha,x)) 
    = \phi(t_1+t_2,\alpha,x)
    \quad \text{and} \quad \psi(t_1,\alpha+t_2,\psi(t_2,\alpha,x))
    = \psi(t_1+t_2,\alpha,x).
$$

Moreover, there exists a nontrivial
$L^{\infty}([0,\infty)\times \mathbb{R}^2)$ weak solution of
$$
    \partial_t u + \langle a;\nabla_x u \rangle= 0\quad \text{and}\quad u(0,\cdot)=0,
$$
which explicitly means that, for every $h\in C^{\infty}_c([0,\infty)\times \mathbb{R}^2)$
\begin{equation}\label{transport.r2.t}
    \int_{0}^{\infty}\int_{\mathbb{R}^3}  u(t,x) \, \left(\partial_t h(t,x)+\langle a(t,x);\nabla_x
    h(t,x)\rangle\right)dxdt=0.
\end{equation}
\textbf{Part 2: Non existence.}
There exists a divergence free  vector field $\tilde{a} \in L^\infty(\mathbb{R}\times \mathbb{R}^2;\mathbb{R}^2)$
with compact support generating no measure preserving flow satisfying the group property.
\end{theorem}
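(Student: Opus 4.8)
The plan is to follow the proof of Theorem~\ref{theorem.supp.comp.R.3} almost verbatim, the only structural change being that the third spatial coordinate $x_3$ of the three-dimensional ``donut'' is now played by the time variable, so the whole argument takes place in the space-time $\mathbb{R}_t\times\mathbb{R}^2_x$. First I would reuse the two-dimensional building blocks of the autonomous construction — the field $b$ of Lemma~\ref{lemma.vector.field.b}, whose flow collapses $x_1$-fibers in the limit, and the square/rectangle rotations $c$ of Lemma~\ref{lemma.rotation.carre} — to assemble a bounded vector field $a(t,x)$, compactly supported in $x$ (uniformly in $t$), with $a(t,\cdot)$ divergence free in $\mathbb{R}^2$ for a.e.\ $t$, and having a ``loop'' of period $T$ (the analogue of $6+3\pi$): over each window of length $T$ the flow collapses a.e.\ $x_1$-fiber to a point when crossing a critical set (the analogue of the plane $[0,1]^2\times\{0\}$), and then returns to a fresh configuration. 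Crucially — as in dimension three, where the critical plane is hit at the orbit-dependent time $t^{+}(x)$ — the critical crossing occurs at different times for different orbits, so that at any fixed time only a Lebesgue-null family of fibers is being collapsed, and measure preservation of each time slice survives. Here the ``return'' of the orbit is supplied for free by advancing $t$, so, unlike the three-dimensional case, no geometric bending is needed and the spatial support stays compact; the single instant $t=1$ at which the critical family degenerates is precisely the one excluded from the group property.

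Next, transcribing Steps~2--4, I would build for each admissible initial time $\alpha$ and each $x$ in the active region $S$ a measure preserving flow $\varphi(\cdot,\alpha,x)$ of $a(\cdot+\alpha,\cdot)$ up to positive/negative stopping times $t^{\pm}(\alpha,x)=t^{\pm}(x)-\alpha$ (the instants at which the orbit hits the critical set), with the period identity $t^{+}-t^{-}\equiv T$, the group property, the collapsing property at the critical instant (the analogues of \eqref{coll.2d} and \eqref{collapsion.avec.tilde.x}), and a measure preserving involution $h$ of the form $x\mapsto(1-x_1,x_2)$ obtained by conjugating the reflection $m(x_1,x_2)=(1-x_1,x_2)$ with $\varphi$. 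Then, as in Step~5, $\phi$ is the $T$-``periodic'' extension of $\varphi$ in $t$ and $\psi$ is obtained by switching, after the first period, to the orbit of $h(x)$; the collapse identity guarantees the switched orbit is still a solution of the ODE and that $\psi(\cdot,\alpha,x)$ is continuous in $t$. The properties of Step~6 then transcribe directly: $\phi$ and $\psi$ solve $\phi(t,\alpha,x)=x+\int_0^t a(s+\alpha,\phi(s,\alpha,x))\,ds$ and likewise for $\psi$; they satisfy the stated group property whenever $\alpha\neq1$ and $t_2+\alpha\neq1$; each $\phi(t,\alpha,\cdot),\psi(t,\alpha,\cdot)$ is a measure preserving bijection between full measure open sets; and, by the $A_2$-type computation, on a slab where the first component is frozen one gets $\phi^1(t,\alpha,x)=x_1$ while $\psi^1(t,\alpha,x)=1-x_1$, so $\phi$ and $\psi$ differ on a set of positive measure.

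For the transport equation I would repeat Step~7 with $\phi(\cdot,0,\cdot)$ and $\psi(\cdot,0,\cdot)$: for $u_0\in C^{\infty}_c(\mathbb{R}^2)$, the functions $v(t,x):=u_0(\phi(-t,0,x))$ and $w(t,x):=u_0(\psi(-t,0,x))$ both solve $\partial_tu+\langle a;\nabla_xu\rangle=0$ with $u(0,\cdot)=u_0$ in the weak sense, using only that $\phi(t,0,\cdot)$ preserves Lebesgue measure and that $t\mapsto\phi(t,0,x)$ is Lipschitz with velocity $a(t,\phi(t,0,x))$ for a.e.\ $x$, through the same change of variables and integration by parts as in the autonomous case; taking $u_0$ with $u_0(x)=x_1$ on $S$, the difference $v-w$ is a nontrivial bounded solution of \eqref{transport.r2.t} vanishing at $t=0$. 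Finally, for Part~2, I would run the analogue of Step~8: on the last ``reflected'' slab only, replace $a$ by the field whose $x$-dynamics is a pure vertical transport, keeping $\tilde a$ bounded, compactly supported and divergence free in $x$ for a.e.\ $t$; uniqueness of the $BV$ flow on either side of the critical instant (by \cite{Ambrosio}) forces any hypothetical measure preserving flow of $\tilde a$ to have the explicit, collapsed form, and then that flow at a suitably chosen time would have to send a set of positive measure into a one-dimensional (hence null) set, contradicting measure preservation.

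The step I expect to be the main obstacle is, as in dimension three, the construction itself (Steps~1--2): one has to choose the time-dependent field so that every time slice is divergence free with matching normal traces across all interfaces, so that the stopping times $t^{\pm}$ are continuous and independent of $x_1$, and so that the loop closes with $t^{+}-t^{-}\equiv T$ and is compatible with both the collapse and the involution $h$. The extra work relative to the autonomous proof is only bookkeeping: every object now carries the initial-time label $\alpha$, but since the period $T$ does not depend on position this amounts to carrying the shift $t^{\pm}(\alpha,x)=t^{\pm}(x)-\alpha$ through the formulas, which is why the group property here holds in the plain (non ``a.e.'') form, with the single exceptional instant $\alpha=1$ (equivalently $t_2+\alpha=1$).
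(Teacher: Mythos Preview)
Your overall strategy --- replace $x_3$ by $t$, use the collapsing flow of Lemma~\ref{lemma.flow.of.b}, and split the two flows via the reflection $m(x_1,x_2)=(1-x_1,x_2)$ --- is right, but you are importing much more of the three-dimensional machinery than is needed, and one piece of it does not survive the translation. The ``$T$-periodic extension'' $\phi(t,x)=\varphi(t-kT,x)$ in Step~5 of the autonomous proof relies on $a$ being autonomous: the orbit from $x$ restarted at time $t^{+}(x)$ coincides with the orbit from $x$ restarted at time $t^{-}(x)$. In the non-autonomous setting with $a$ compactly supported in \emph{time} (which the statement requires --- compact support in $\mathbb{R}\times\mathbb{R}^2$, not just in $x$) there is no ``second period'': once $t+\alpha$ leaves $\operatorname{supp}_ta$, the flow freezes, so $\phi$ and $\psi$, which by your construction agree on $[t^{-},t^{+}]$ and at the collapse instant, would also agree thereafter. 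The $x$-dependent stopping times, the closed loop $\varphi(t^{+}(x),x)=\varphi(t^{-}(x),x)$, the seven slabs $A_1,\dots,A_7$ and the conjugated involution $h$ are all features of the donut geometry that have no role here.

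The paper's construction is much shorter. One takes $a(t,x):=b(t,x)$ for $t<1$ and $a(t,x):=-b(2-t,x)$ for $t>1$, so $\operatorname{supp}a\subset[0,2]\times[0,1]^2$. For $\alpha<1$ the first flow is
\[
\phi(t,\alpha,x)=\begin{cases}\chi^{(\alpha)}(t,x)&t\le 1-\alpha,\\[2pt] \chi^{(\alpha)}(2-2\alpha-t,x)&t\ge 1-\alpha,\end{cases}
\]
namely run $b$ forward to the collapse at the single instant $t+\alpha=1$ and then retrace the \emph{same} path backwards. The second flow $\psi$ coincides with $\phi$ for $t\le 1-\alpha$ and, for $t\ge 1-\alpha$, follows instead the $\phi$-trajectory of the reflected initial point $\phi(\alpha,0,m(\phi(\alpha,0,\cdot)^{-1}(x)))$; continuity across $t=1-\alpha$ is exactly the collapsing property \eqref{collapsing.property}. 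Thus the branching happens at one fixed instant (the excluded value $\alpha=1$, respectively $t_2+\alpha=1$), not at $x$-dependent times, and no periodicity, no multi-slab decomposition and no auxiliary map $h$ are needed. Your Steps for the transport equation and for Part~2 are then correct: for Part~2 one simply sets $\tilde a(t,\cdot)=a(t,\cdot)$ for $t<1$ and $\tilde a\equiv 0$ for $t\ge 1$, and the contradiction is obtained as in Step~8 of the autonomous proof.
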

\begin{remark}
(i) Note that the bounded vector field constructed in \cite{Depauw} (for which the transport equation has two solutions) is periodic in $x$ and hence it does not belong to
$L^p(\mathbb{R}\times \mathbb{R}^2)$ for any $p<\infty$.\smallskip

(ii) The remark \ref{remark.apres.theorme.supp.compact.R3} is also valid for the above theorem.
\end{remark}

\begin{proof} The proof is very similar (and in fact easier) to the one of Theorem \ref{theorem.supp.comp.R.3}. Oversimplifying, the $x_3$ variable in Theorem \ref{theorem.supp.comp.R.3} will play the role of the time in the present proof.\smallskip

\textit{\underline{Step 1.}}
We first define $a(t,x)=b(t,x)$ for $t<1$ and $x\in \mathbb{R}^2$
where $b$ is the vector field constructed in Lemma \ref{lemma.vector.field.b}.
Finally, for $t>1$ and $x\in \mathbb{R}^2$ we let
$$a(t,x):=-a(2-t,x).$$
By a direct application of Lemma \ref{lemma.vector.field.b} we deduce that (in the sense of distributions)
$$\operatorname{div}_x a(t,\cdot)=0\quad \text{for every $t\neq 1$.}
$$
Moreover we observe that
$a\in L^{\infty}(\mathbb{R}\times \mathbb{R}^2;\mathbb{R}^2)$ and that $\operatorname{supp}a\subset [0,2]\times [0,1]^2.$\smallskip

\textit{\underline{Step 2:} A first flow of $a$.}
First for every $\alpha<1$ and $x\in \mathbb{R}^2$ define
$$\phi(t,\alpha,x):=\left\{
\begin{array}{cl}
\chi^{(\alpha)}(t,x)&\text{if $t\leq 1-\alpha$}\\
\chi^{(\alpha)}(2-2\alpha-t,x)&\text{if $t\geq 1-\alpha$}
\end{array}
\right.$$
where $\chi^{(\alpha)}$ is the flow of $(t,x)\rightarrow b(t+\alpha,x)$ exhibited in Lemma \ref{lemma.flow.of.b}.
For $\alpha>1$ define for $x\in \mathbb{R}^2$ and $t\in \mathbb{R}$
$$\phi(t,\alpha,x):=\phi(-t,1-\alpha,x).$$
From Lemma \ref{lemma.flow.of.b} and the fact that $a(t,x)=-a(2-t,x)$ we easily deduce the following properties:
\begin{itemize}
\item Flow of $a$: for every $\alpha\neq 1$, $x\in \mathbb{R}^2$ and $t\in \mathbb{R}$
\begin{equation}\label{flow.a.2}\phi(t,\alpha,x)=x+\int_0^ta(s+\alpha,\phi(t,\alpha,x))ds.
\end{equation}

\item For every $\alpha\neq 1$ and $t+\alpha\neq 1$, $\phi(t,\alpha,\cdot)$ is a bijection from $\mathbb{R}^2$ to $\mathbb{R}^2$ preserving the measure.

\item For every $\alpha\neq 1$ and $t_1,t_2\in \mathbb{R}$ with $\alpha+t_2\neq 1$ we have
\begin{equation}\label{group.prop.phi}\phi(t_2,\alpha+t_2,\phi(t_2,\alpha,x))=\phi(t_1+t_2,\alpha,x).\end{equation}

\item Collapsing of the fibers:
\begin{equation}\label{collopsing.fiber.dim.2}
\phi(1,0,(0,1)\times \{x_2\})\quad \text{is a singleton}
\end{equation}
for every $x_2\in (0,1)\setminus Z$ where
$$Z=\{j2^{-i}|\text{ }0\leq j\leq 2^i,i\geq 1\}.$$
\end{itemize}

\textit{\underline{Step 3:} A different flow for $a$.}
First, for $\alpha<1$ we define, for $x\in\mathbb{R}^2$
$$\psi(t,\alpha,x):=\left\{
\begin{array}{cl}
\phi(t,\alpha,x)&\text{if $t\leq 1-\alpha$}\\
\phi(t,\alpha,x)&\text{if $t\geq 1-\alpha$ and $x\notin A_{\alpha}$}\\
\phi(t,\alpha,\phi(\alpha,0,m(\phi(\alpha,0,\cdot)^{-1}(x))))&\text{if $t\geq 1-\alpha$ and $x\in A_{\alpha}$}\\
\end{array}
\right.$$
where $m(x_1,x_2)=(1-x_1,x_2)$ and
$$A_{\alpha}:=\{\phi(\alpha,0,\cdot)^{-1}\{(0,1)\times ((0,1)\setminus Z)\}.$$
For $\alpha>1$ we define, for every $t\in \mathbb{R}$ and $x\in \mathbb{R}^2,$
$$\psi(t,\alpha,x):=\psi(-t,1-\alpha,x).$$
First from \eqref{group.prop.phi} we deduce that, for every $\alpha\neq 1$ and every $y\in \mathbb{R}^2$
$$\phi(1-\alpha,\alpha,\phi(\alpha,0,y))=\phi(1,0,y).$$
Hence, combining the last equation with \eqref{collopsing.fiber.dim.2}, we get that, for every $\alpha\neq1$ and $x\in A_{\alpha},$
$$\psi(1-\alpha,\alpha,x)=\phi(1-\alpha,\alpha,x).$$
Hence, from \eqref{flow.a.2}, we get that
for every $\alpha\neq 1$, $x\in \mathbb{R}^2$ and $t\in \mathbb{R}$
$$\psi(t,\alpha,x)=x+\int_0^ta(s+\alpha,\psi(t,\alpha,x))ds.$$
Moreover since, $m$ and $\phi(t,\alpha,\cdot)$ are measure preserving and bijections from $\mathbb{R}^2$ onto $\mathbb{R}^2$ for every $\alpha\neq 1$ and $t+\alpha\neq 1$ we get that the same is true for
$\psi(t,\alpha,\cdot)$.
Finally from \eqref{group.prop.phi} we easily that $\psi$ also satisfies the group property: namely $\alpha\neq 1$ and $t_1,t_2\in \mathbb{R}$ with $\alpha+t_2\neq 1$ we have
$$\phi(t_2,\alpha+t_2,\phi(t_2,\alpha,x))=\phi(t_1+t_2,\alpha,x).$$
Also, for every $t\geq 2$, $x_1\in (0,1)\setminus\{1/2\}$ and $x_2\in (0,1)\setminus Z$ we have, since $a(t,\cdot)\equiv 0$ for $t< 0$,
\begin{equation}\label{flow.distinct.2d}
\phi(t,0,x)=(x_1,x_2)\neq (1-x_1,x_2)=\psi(t,0,x).
 \end{equation}
From Steps 2 and 3 we have indeed found two distinct flows of $a$ which are measure preserving and satisfying the group property. \smallskip

\textit{\underline{Step 4:} Non uniqueness for the transport equation.}
For $u_0\in C^{\infty}_c(\mathbb{R}^2)$ define $v,w\in L^{\infty}([0,\infty)\times\mathbb{R}^2)$ by
$$v(t,\cdot):=u_0((\phi(t,0,\cdot))^{-1})\quad \text{and}\quad w(t,\cdot):=u_0((\psi(t,0,\cdot))^{-1}).$$
Proceeding exactly as in Step 7 of the previous proof we have that $v$ and $w$ both solve
$$\frac{\partial u}{\partial t}+
\langle a;\nabla_x u\rangle
=0\quad \text{and}\quad u(0,\cdot)=u_0,$$ in the weak sense.
Choose $u_0\in C^{\infty}_c(\mathbb{R}^2)$ such that $u_0(x)=x_1$ in $(0,1)^2$ and let
$u:=v-w\in L^{\infty}([0,\infty)\times\mathbb{R}^2).$ Then  $u$ is not
identically zero (cf. \eqref{flow.distinct.2d}) and satisfies \eqref{transport.r2.t} which proves the second part of the theorem and concludes the
proof.\smallskip

\textit{\underline{Step 5:} Step 5: Non existence of a flow.}
Define $\tilde{a}:\mathbb{R}\times \mathbb{R}^2\rightarrow \mathbb{R}^2$ by
$$\tilde{a}(t,\cdot):=\left\{
\begin{array}{cl}
a(t,\cdot)&\text{for $t<1$}\\
0&\text{for $t\geq 1$}.
\end{array}
\right.
$$
From the properties of $a$ (cf. Step 1) we directly get that $\tilde{a}$ is bounded, measurable, divergence free and has compact support. Proceeding exactly as in Step 8 of the proof of Theorem \ref{theorem.supp.comp.R.3} we show that there does not exist a measure preserving flow of $\tilde{a}$ satisfying the group property. This proves the last part of the theorem and concludes the proof.
\end{proof}

\section{Appendix}

In the proofs of the previous two theorems we have used the following three lemmas
inspired by \cite{Depauw}.

The first one exhibits two divergence free vector fields in $\mathbb{R}^2$ whose resulting measure preserving flow is a "square" rotation, respectively a "rectangle" rotation, and are the basic bricks to construct the vector field $a$ and $\tilde{a}$ of Theorems \ref{theorem.supp.comp.R.3} and \ref{theorem.supp.comp.R.2.t}.

\begin{lemma}\label{lemma.rotation.carre} 1) Define $c\in L^{\infty}(\mathbb{R}^2;\mathbb{R}^2)$ by
$$c(x):=\left\{
\begin{array}{cl}
(0,8 x_1)& \text{if $|x_2|<|x_1|<1/2$,}\\
(-8 x_2,0)& \text{if $|x_1|<|x_2|<1/2$,}\\
0&\text{elsewhere.}
\end{array}
\right.
$$
Then $\operatorname{div}c=0$ in $\mathbb{R}^2$ in the sense of distributions and the normal component of $c$ is $0$ across $\partial
(-1/2,1/2)^2$.
Additionally  there exists $\xi^c:\mathbb{R}\times\mathbb{R}^2\rightarrow \mathbb{R}^2$ with the following
properties:

(i) for every $x\in \mathbb{R}^2$ and every $t\in \mathbb{R}$
$$\xi^c(t,x)=x+\int_{0}^{t}c(\xi^c(s,x))ds.$$

(ii)  for every $x\in \mathbb{R}^2$ and every $t_1,t_2\in \mathbb{R}$
$$\xi^c(t_1+t_2,x)=\xi^c(t_1,\xi^c(t_2,x)).$$

(iii) for every $t\in \mathbb{R}$, $\xi^c(t,\cdot)$ is a bijection from $\mathbb{R}^2$ onto $\mathbb{R}^2$
 preserving the
Lebesgue measure.

(iv) $\xi^c(t,\cdot)$ is a "square" rotation in $(-1/2,1/2)^2$ of angle $2\pi t$
and the identity outside $(-1/2,1/2)^2$.
In particular
$$\xi^c(1/4,x)=\left\{
\begin{array}{cl}
(-x_2,x_1) &\text{for  $(x_1,x_2)\in (-1/2,1/2)^2$},\\
x &\text{elsewhere}.
\end{array}
\right.
$$

2) Define $d\in L^{\infty}(\mathbb{R}^2;\mathbb{R}^2)$ by
$$d(x):=\left\{
\begin{array}{cl}
(0,4 x_1)& \text{if $|2x_2|<|x_1|<1/2$,}\\
(-8 x_2,0)& \text{if $|x_1|<|2x_2|<1/2$,}\\
0&\text{elsewhere.}
\end{array}
\right.
$$
Then $\operatorname{div} d=0$ in $\mathbb{R}^2$ in the sense of distributions  and the normal component of $d$ is $0$ across $\partial
\left[(-1/2,1/2)\times (-1/4,1/4)\right]$.
Additionally  there exists $\xi^d:\mathbb{R}\times\mathbb{R}^2\rightarrow \mathbb{R}^2$ satisfying the previous points (i)-(iii) with $c$ replaced by $d$. Moreover $\xi^d(t,\cdot)$ is s a "rectangle" rotation in $(-1/2,1/2)\times (-1/4,1/4)$ of angle $2\pi t$
and the identity outside $(-1/2,1/2)\times (-1/4,1/4)$. In particular
$$\xi^d(1/4;x)=\left\{
\begin{array}{cl}
(-2x_2,x_1/2) &\text{for  $(x_1,x_2)\in (-1/2,1/2)\times (-1/4,1/4)$},\\
x &\text{elsewhere}.
\end{array}
\right.
$$
\end{lemma}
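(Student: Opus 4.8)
I would prove part 1) by writing down the ``square rotation'' $\xi^c$ explicitly, and then obtain part 2) from part 1) by an elementary linear change of variables, so that everything reduces to the square case. For the divergence of $c$, the quickest route is to note that $c=(\partial_{x_2}H,-\partial_{x_1}H)$ almost everywhere, where $H(x):=-4\big(\max\{|x_1|,|x_2|\}\big)^2$ for $\|x\|_\infty\le 1/2$ and $H(x):=-1$ for $\|x\|_\infty\ge 1/2$ is a Lipschitz (continuous, piecewise quadratic) stream function — the two formulas match on $\partial(-1/2,1/2)^2$ — so $\operatorname{div}c=0$ in $\mathcal D'(\mathbb R^2)$. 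Equivalently, and more hands-on: $c$ is smooth and classically divergence free on each of $\{|x_2|<|x_1|<1/2\}$, $\{|x_1|<|x_2|<1/2\}$, $\{\|x\|_\infty>1/2\}$, while its normal component has no jump across the four diagonal segments $\{|x_1|=|x_2|<1/2\}$ (on $\{x_2=x_1\}$, with normal $\propto(1,-1)$, both $(0,8x_1)$ and $(-8x_2,0)$ give normal component $-8x_1$) nor across $\partial(-1/2,1/2)^2$ (where $c$ is tangent from inside and vanishes outside); this yields $\operatorname{div}c=0$ in $\mathbb R^2$ and the claimed vanishing of the normal component of $c$ on $\partial(-1/2,1/2)^2$.

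\textbf{The flow $\xi^c$.} Set $\xi^c(t,x):=x$ whenever $x=0$ or $\|x\|_\infty\ge 1/2$. For $0<r:=\|x\|_\infty<1/2$, the restriction of $c$ to the boundary square $\Gamma_r:=\{\|y\|_\infty=r\}$ is continuous, tangent to $\Gamma_r$, circulates counterclockwise (upward on the right side, leftward on the top, and so on), and has \emph{constant} modulus $8r$; hence an integral curve of $c$ sweeps $\Gamma_r$ at constant perimeter speed and is periodic of period $\mathcal H^1(\Gamma_r)/(8r)=8r/(8r)=1$, independently of $r$. Concretely, letting $\theta\in\mathbb R/\mathbb Z$ be the counterclockwise normalized arclength parameter on $\Gamma_r$ and $y(r,\theta)$ the corresponding point (a piecewise affine function of $(r,\theta)$ on each of the four sides), I \emph{define} $\xi^c(t,y(r,\theta)):=y(r,\theta+t)$. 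Then (i) holds because on each side $t\mapsto\xi^c(t,x)$ is affine with velocity equal to the constant value of $c$ there, and $\xi^c(\cdot,x)$ is globally Lipschitz (piecewise affine, with corners only at the countably many times it crosses a diagonal), and (i) is trivial for $\|x\|_\infty\ge 1/2$; (ii) is $\theta+(t_1+t_2)=(\theta+t_2)+t_1$; in (iii), bijectivity with inverse $\xi^c(-t,\cdot)$ follows from (ii) and $\xi^c(0,\cdot)=\operatorname{id}$; and (iv) is immediate since $\theta\mapsto\theta+t$ is by definition the rotation of each $\Gamma_r$ by the fraction $t$ of a full turn, which for $t=1/4$ is the quarter-turn $(x_1,x_2)\mapsto(-x_2,x_1)$ on every $\Gamma_r$, hence on $(-1/2,1/2)^2$, and is the identity outside. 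For the Lebesgue-measure preservation in (iii) I would apply the coarea formula to the $1$-Lipschitz function $u(y):=\|y\|_\infty$, whose gradient has modulus $1$ a.e.: this gives $|E|=\int_0^{1/2}\mathcal H^1(E\cap\Gamma_r)\,dr$ for every Borel $E\subset(-1/2,1/2)^2$, and since $\xi^c(t,\cdot)$ maps each $\Gamma_r$ onto itself preserving arclength, $|\xi^c(t,E)|=\int_0^{1/2}\mathcal H^1(E\cap\Gamma_r)\,dr=|E|$; outside the (null) square $\xi^c(t,\cdot)$ is the identity.

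\textbf{Part 2) by conjugation.} Let $L(x_1,x_2):=(x_1,2x_2)$, which maps the rectangle $(-1/2,1/2)\times(-1/4,1/4)$ onto the square $(-1/2,1/2)^2$ and the regions defining $d$ onto those defining $c$, so that $d(x)=L^{-1}\big(c(Lx)\big)$. For a diagonal $L$ one checks $\operatorname{div}\big(L^{-1}(c(L\,\cdot))\big)(x)=(\operatorname{div}c)(Lx)=0$ in $\mathcal D'(\mathbb R^2)$, and the vanishing of the normal component of $d$ on $\partial[(-1/2,1/2)\times(-1/4,1/4)]$ is the $L$-image of the corresponding statement for $c$. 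Setting $\xi^d(t,x):=L^{-1}\big(\xi^c(t,Lx)\big)$ gives $\tfrac{d}{dt}\xi^d(t,x)=L^{-1}c(\xi^c(t,Lx))=L^{-1}c(L\xi^d(t,x))=d(\xi^d(t,x))$, so (i) holds; (ii) and the bijectivity in (iii) are inherited because conjugation by $L$ respects composition; $\xi^d(t,\cdot)$ preserves Lebesgue measure since $L$ multiplies it by the constant $|\det L|=2$; and $\xi^d(t,\cdot)$ is the ``rectangle rotation'', with $\xi^d(1/4,x)=L^{-1}\big(-(Lx)_2,(Lx)_1\big)=L^{-1}(-2x_2,x_1)=(-2x_2,x_1/2)$ inside the rectangle and the identity outside.

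\textbf{Main obstacle.} The conceptual ingredients — constant perimeter speed $\Rightarrow$ period $1$, coarea $\Rightarrow$ measure preservation, and the linear conjugation for $d$ — are short. The real work, and the place most prone to sign and factor slips, is the elementary bookkeeping behind the flow construction: fixing the explicit piecewise affine parametrization $y(r,\theta)$ of $\Gamma_r$, verifying on each of the four sides that $c$ is indeed its velocity with the correct orientation and modulus $8r$, and handling the diagonal crossings carefully enough that $\xi^c(\cdot,x)$ is genuinely Lipschitz and solves the integral equation (i) — together with the analogous jump-condition checks used for $\operatorname{div}c=0$.
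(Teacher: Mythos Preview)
Your approach is essentially the paper's: the explicit angular-shift definition of $\xi^c$ on the level squares $\{\|x\|_\infty=r\}$ (the paper writes $x=\rho(x)\theta(x)$ with $\theta\in\mathbb R/4\mathbb Z$ and sets $\xi^c(t,\rho,\theta)=(\rho,\theta+4t)$, which is your $y(r,\theta)\mapsto y(r,\theta+t)$), and the reduction of part 2) to part 1) by the linear conjugation $\xi^d(t,x)=L^{-1}\xi^c(t,Lx)$, are exactly what the paper does (it uses $p=L^{-1}$); your stream-function argument for $\operatorname{div}c=0$ and the coarea computation for measure preservation are welcome details the paper leaves as ``easily seen''. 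One point worth rechecking (glossed over in the paper as well): in the region $|x_1|<|2x_2|<1/2$ one gets $L^{-1}c(Lx)=L^{-1}(-8\cdot 2x_2,0)=(-16x_2,0)$, not the $(-8x_2,0)$ appearing in the definition of $d$, so the identity $d=L^{-1}(c\circ L)$ you invoke does not hold verbatim; either the coefficient in $d$ is a typo for $-16x_2$, or $\xi^d$ must be built directly rather than by conjugation.
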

\begin{remark} Note that there exist infinitely many flows of $c$ (and $d$); indeed, for example for $c$, one can stay any amount of time once reached the "diagonals" $\{(x_1,x_2)|\text{ }0<|x_1|=|x_2|<1\}$ (where $c$ is identically zero). However, since $c$ and $d$ belong to $BV(\mathbb{R}^2)$ note that $\xi^{c}$, resp $\xi^{d}$, is (up to a null set in $\mathbb{R}\times \mathbb{R}^2$) the unique measure preserving flow of $c$, resp $d$ (cf. \cite{Ambrosio}).
\end{remark}
\begin{proof} \textit{\underline{Step 1:} Proof of 1).}
First we obviously have $\operatorname{div}c=0$ in the four triangles
$$\{-x_2<x_1<x_2,0<x_2<1/2\},\quad  \{-x_2<x_1<x_2,-1/2<x_2<0\}$$
$$\{-x_1<x_2<x_1,0<x_1<1/2\}\quad \text{and} \quad \{-x_1<x_2<x_1,-1/2<x_1<0\}.$$
See Figure 5 for a sketch of $c$.
Moreover since the normal component of $c$ is $0$ across the boundary of each of those four triangles (which contains
$\partial (-1/2,1/2)^2$)
we immediately get that, in the sense of distributions,
$\operatorname{div}c=0$ in $\mathbb{R}^2.$\smallskip

Let $\rho(x):=\max(|x_1|,|x_2|)$. For $x\in \{\rho<1/2\}=(-1/2,1/2)^2$ we write
$x=\rho(x)\theta(x)$ where $\theta$ belongs to the boundary of $(-1/2,1/2)^2$ identified with
$\mathbb{R}/4\mathbb{Z}.$
Then defining $\xi^c:\mathbb{R}\times \mathbb{R}^2\rightarrow \mathbb{R}^2$ by
$$\xi^c(t,x)=x\quad \text{if $x\in \mathbb{R}^2\setminus (-1/2,1/2)^2$}$$ and
$$
\xi^c(t,x)=\xi^c(t,\rho,\theta)=(\rho,\theta+4 t ),\quad \text{for $x\in (-1/2,1/2)^2$,}
$$
\begin{figure}
\begin{center}
\begin{tikzpicture}[scale=0.4]

\draw (-3,3)--(3,3);\draw (-3,-3)--(3,-3);\draw (-3,-3)--(-3,3);\draw (3,-3)--(3,3);
\draw (-3,-3)--(3,3);\draw (-3,3)--(3,-3);
\draw[very thick] (0,2) node[scale=1.5]{$\leftarrow$};
\draw[very thick] (0,-2) node[scale=1.5]{$\rightarrow$};
\draw[very thick] (-2,0) node[scale=1.5]{$\downarrow$};
\draw[very thick] (2,0) node[scale=1.5]{$\uparrow$};
\draw (0,-3.5) node[scale=1]{$c$};

\draw (6,-3)--(6,0);\draw (6,0)--(12,0);\draw (6,-3)--(12,-3);\draw (6,-3)--(12,0);\draw (12,-3)--(12,0);
\draw (6,-3)--(12,0);\draw (6,0)--(12,-3);
\draw[very thick] (9,-3/4) node[scale=1.5]{$\leftarrow$};
\draw[very thick] (9,-9/4) node[scale=1.5]{$\rightarrow$};
\draw[very thick] (7,-1.5) node[scale=0.75]{$\downarrow$};
\draw[very thick] (11,-1.5) node[scale=0.75]{$\uparrow$};
\draw (9,-3.5) node[scale=1]{$d$};
\end{tikzpicture}
\caption{The vector fields $c$ and $d$}
\end{center}
\end{figure}
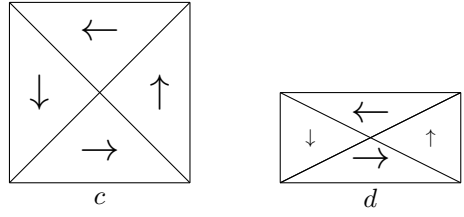
it is easily seen that
$\xi^c$ satisfies all the claimed properties of the lemma. In particular note that $\xi^c(1,x)=x$
hence  $t=1$ corresponds to a rotation of
 $2\pi$ which implies that  $\xi^c(t,\cdot)$ is indeed a square rotation of $2\pi t;$
moreover noting that a "square" rotation of angle $\pi/2$ is the usual rotation of angle $\pi/2$ (observe that this property is only true for integer multiples of $\pi/2$) we get that
$\xi^c(1/4,x)=(x_2,-x_1)$ in $(-1/2,1/2)^2$ (and the identity outside $(-1/2,1/2)^2$).
\smallskip

\textit{\underline{Step 2:} Proof of 2).} The assertions concerning the vector field $d$ are proven exactly as the ones for $c.$
Letting $p:\mathbb{R}^2\rightarrow \mathbb{R}^2$ defined by $p(x_1,x_2):=(x_1,x_2/2)$, note that
$$d(x)=p^{-1}(c(p(x)))\quad \text{for every $x\in \mathbb{R}^2$}.$$ Hence it is elementary to see that
$$\xi^d(t,x):=p^{-1}(\xi^c(t,p(x)))=\left((\xi^c)^1(t,(x_1,2x_2)),\frac{(\xi^c)^2(t,(x_1,2x_2))}{2}\right)$$ satisfies all the wished properties.
\end{proof}

\begin{lemma}\label{lemma.vector.field.b} Let $c$ and $d$ be as in Lemma \ref{lemma.rotation.carre}.
Define $b=b(t,x)\in L^{\infty}((-\infty,1)\times \mathbb{R}^2;\mathbb{R}^2)$ as follows.
First let $b(t,x)\equiv 0$ for $x\notin [0,1]^2$ and $t\in [0,1)$ and for $x\in \mathbb{R}^2$ and $t<0.$ and  Then define it on  $[0,1/2)\times [0,1]^2$ by
$$b(t,x):=\left\{
\begin{array}{cl}
d(x_1-1/2,x_2-1/4) &\text{for $0\leq t<1/4$ and $x\in [0,1]\times [0,1/2], $}\\
d(x_1-1/2,x_2-3/4) &\text{for $0\leq t<1/4$ and $x\in [0,1]\times [1/2,1], $}\\
-c(x_1-1/2,x_2-1/2) &\text{for $1/4\leq t<1/2$ and $x\in [0,1]\times [0,1].$ }
\end{array}
\right.
$$
Define it finally in $[1/2,1)\times [0,1]^2$ by inductively scaling the geometry by a factor $1/2$ (but leaving its range unchanged) in the following way:
For every $i\geq 1$ decompose $[0,1]^2$ into $4^i$ diadic (closed) squares (of size $1/2^{i}$) denoted by $C^i_j,$
$1\leq j\leq 4^i,$
and denote their left lower vertices by $l^{i}_j$. Let also
$$t_i:=\sum_{l=1}^i2^{-l}.$$
Then for every $i\geq 1$ define $b$ in $[t_i,t_{i+1})\times \mathbb{R}^2$ by
$$b(t,x):=\left\{
\begin{array}{cl}
b\big(2^i(t-t_i,x-l^i_j)\big) &\text{for $t\in [t_i,t_{i+1})$ and $x\in C^i_j$, $1\leq j\leq 4^i,$}\\
0 &\text{for $t\in [t_i,t_{i+1})$ and $x\notin [0,1]^2.$}
\end{array}
\right.
$$
Then $b\in L^{\infty}((-\infty,1)\times \mathbb{R}^2;\mathbb{R}^2)$ and for every $t<1$ $\operatorname{div}_x(b(t,\cdot)=0$ in $\mathbb{R}^2$ in the sense of distributions.
Moreover, for every $t<1$ the normal component of $b(t,\cdot)$ is zero across $\partial [0,1]^2$.
\end{lemma}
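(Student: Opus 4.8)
The plan is to treat the $L^{\infty}$ bound first and then prove $\operatorname{div}_x b(t,\cdot)=0$ for each fixed $t<1$ by induction on the dyadic scale $i$ appearing in the construction. For the $L^{\infty}$ claim: $b$ is defined piecewise on the countable measurable partition of $(-\infty,1)\times\mathbb{R}^2$ into $\{t<0\}\times\mathbb{R}^2$, $[0,1/4)\times[0,1]^2$, $[1/4,1/2)\times[0,1]^2$, the slabs $[t_i,t_{i+1})\times C^i_j$, and the complementary regions where $b\equiv 0$; on each piece $b$ is obtained from $c$, $d$, or (inductively) from the already-defined restriction of $b$ to $[0,1/2)\times[0,1]^2$ by an affine change of variables in $(t,x)$ that does \emph{not} rescale the values of the field. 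Hence $b$ is measurable and $\|b\|_{L^{\infty}((-\infty,1)\times\mathbb{R}^2)}=\max\{\|c\|_{L^{\infty}},\|d\|_{L^{\infty}}\}<\infty$ by Lemma \ref{lemma.rotation.carre}. One should note here that the recursive definition is well founded: the formula on $[t_i,t_{i+1})$ only invokes $b$ on $[0,1/2)\times[0,1]^2$, since $2^i(t-t_i)$ ranges over $[0,2^i(t_{i+1}-t_i))=[0,1/2)$.

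The core of the argument is the following elementary observation, which I would state and prove first. If $g\in L^{\infty}(\mathbb{R}^2;\mathbb{R}^2)$ satisfies $\operatorname{div}g=0$ in $\mathcal{D}'(\mathbb{R}^2)$ and $g\equiv 0$ outside a bounded set $E$, then for every $\lambda>0$ and $y_0\in\mathbb{R}^2$ the rescaled field $x\mapsto g(\lambda(x-y_0))$ is again divergence free in $\mathcal{D}'(\mathbb{R}^2)$ and vanishes outside $y_0+\lambda^{-1}E$; moreover, if $(g_k)_k$ is a uniformly bounded family of fields, each divergence free in $\mathcal{D}'(\mathbb{R}^2)$ and vanishing outside cubes $Q_k$ with pairwise disjoint interiors whose union is a cube $Q$, then $\sum_k g_k$ is divergence free in $\mathcal{D}'(\mathbb{R}^2)$ and vanishes outside $\bigcup_k\operatorname{int}Q_k$, in particular in a neighbourhood of $\partial Q$. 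Both parts are immediate from the definition: the scaling is a change of variables in $\int g(\lambda(x-y_0))\cdot\nabla\varphi\,dx$, and the gluing uses $\int(\sum_k g_k)\cdot\nabla\varphi=\sum_k\int g_k\cdot\nabla\varphi=0$ for $\varphi\in C_c^{\infty}(\mathbb{R}^2)$ since each summand already vanishes as a distribution on all of $\mathbb{R}^2$ — so no interface matching is needed.

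With this in hand I would run the induction. Base case $t<1/2$: for $t<0$, $b(t,\cdot)\equiv 0$; for $0\le t<1/4$, $b(t,\cdot)$ is the superposition of $d(\,\cdot\,-(1/2,1/4))$ and $d(\,\cdot\,-(1/2,3/4))$, each divergence free in $\mathbb{R}^2$ and vanishing outside $(0,1)\times(0,1/2)$, resp.\ $(0,1)\times(1/2,1)$, by Lemma \ref{lemma.rotation.carre}(2); for $1/4\le t<1/2$, $b(t,\cdot)=-c(\,\cdot\,-(1/2,1/2))$ is divergence free in $\mathbb{R}^2$ and vanishes outside $(0,1)^2$ by Lemma \ref{lemma.rotation.carre}(1). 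In all subcases $b(t,\cdot)$ is divergence free in $\mathbb{R}^2$ and vanishes in a neighbourhood of $\partial[0,1]^2$. Inductive step: fix $t\in[t_i,t_{i+1})$ and set $s:=2^i(t-t_i)\in[0,1/2)$; then $b(t,\cdot)$ agrees a.e.\ with $\sum_{j=1}^{4^i} b\big(s,2^i(\,\cdot\,-l^i_j)\big)$, and $b(t,\cdot)\equiv 0$ outside $[0,1]^2$. Since by the base case $b(s,\cdot)$ is divergence free in $\mathbb{R}^2$ and vanishes outside $(0,1)^2$, the observation above shows each $x\mapsto b(s,2^i(x-l^i_j))$ is divergence free in $\mathbb{R}^2$ and vanishes outside $\operatorname{int}C^i_j$; as the $C^i_j$ tile $[0,1]^2$ with disjoint interiors, the gluing part yields that $b(t,\cdot)$ is divergence free in $\mathbb{R}^2$ and vanishes in a neighbourhood of $\partial[0,1]^2$. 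This proves $\operatorname{div}_x b(t,\cdot)=0$ for every $t<1$, and the normal-component statement follows at once, the trace of $b(t,\cdot)$ from either side of $\partial[0,1]^2$ being zero.

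I expect no genuine obstacle here; the only point requiring care is organizational — keeping track of the self-similar definition and of the fact that the rescaling is the one leaving the range of $b$ unchanged, which is exactly what keeps $b$ bounded and makes the scaling observation applicable. In particular, the distributional-divergence gluing is trivial in this setting precisely because every dyadic building block is divergence free on all of $\mathbb{R}^2$, not merely inside its own cube, so there are no normal-trace matching conditions to check.
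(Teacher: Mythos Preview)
Your approach is the same as the paper's --- use that $c$ and $d$ are divergence free on all of $\mathbb{R}^2$ (Lemma~\ref{lemma.rotation.carre}), so each dyadic building block is globally divergence free and the gluing is trivial --- and your write-up is considerably more explicit than the paper's one-sentence proof. The $L^\infty$ bound and the distributional divergence argument are correct.

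There is one inaccuracy worth flagging. The claim that $b(t,\cdot)$ ``vanishes in a neighbourhood of $\partial[0,1]^2$'' is false: for $1/4\le t<1/2$, say, $b(t,x)=-c(x-(1/2,1/2))$ has modulus close to $4$ at points $x\in(0,1)^2$ arbitrarily close to the edges (e.g.\ along $\{x_2=1/2\}$ as $x_1\to 1^{-}$). What \emph{is} true is that $b(t,\cdot)$ vanishes on $\mathbb{R}^2\setminus(0,1)^2$ (so in particular on $\partial[0,1]^2$), and that its \emph{normal} component vanishes there --- which is exactly the content of Lemma~\ref{lemma.rotation.carre}. Your divergence-free argument is unaffected, since as you correctly emphasize it only uses that each block is divergence free on all of $\mathbb{R}^2$. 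But your final sentence deduces the normal-component statement from the ``vanishes in a neighbourhood'' claim, and that deduction rests on a false premise; replace it either by a direct appeal to the normal-component assertion in Lemma~\ref{lemma.rotation.carre} (which is preserved under translation and dilation), or by the observation that a bounded field which is divergence free in $\mathcal{D}'(\mathbb{R}^2)$ and identically zero outside $[0,1]^2$ necessarily has zero normal trace on $\partial[0,1]^2$.
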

\begin{proof} First it is clear that $b$ is measurable and bounded in $(-\infty,1)\times \mathbb{R}^2$ once observed that, for every $i\geq 1$ and $1\leq j\leq 4^i,$
$$\|b\|_{L^{\infty}([t_i,t_{i+1})\times C^i_j)}=\|b\|_{L^{\infty}([0,1/2)\times [0,1]^2)}.$$
Since, from Lemma \ref{lemma.rotation.carre}, we know that $\operatorname{div}c=0$ in $(-1/2,1/2)^2$ and that its four normal components are $0$ across $\partial (-1/2,1/2)^2$ and,  similarly for $d$ on the boundary of $(-1/2,1/2)\times (-1/4,1/4)$, we directly deduce from the definition of $b$ that, for every $t<1,$ the normal component of $b(t,\cdot)$ is zero across $\partial [0,1]^2$ and that $\operatorname{div}_xb(t,\cdot)=0$ in $\mathbb{R}^2.$
\end{proof}
\begin{lemma}\label{lemma.flow.of.b} Let $b:(-\infty,1)\times \mathbb{R}^2\rightarrow [0,1]^2$ be the vector field defined in the previous lemma. Then, for every  $z<1$, there exists a measurable map $\chi^{(z)}:(-\infty,1-z]\times \mathbb{R}^2\rightarrow [0,1]^2$ satisfying the following properties:
\begin{itemize}
\item Flow of $b$ shifted by $z:$ for every $x\in \mathbb{R}^2$ then
\begin{equation}\label{flow.of.b}\chi^{(z)}(t,x)=x+\int_0^t b(s+z,\chi^{(z)}(s,x))ds\quad \text{for every $t\in (-\infty,1-z]$}.
\end{equation}
\item For every $t\in (-\infty,1-z),$ $\chi^{(z)}(t,\cdot)$ is a bijection from $\mathbb{R}^2$ onto $\mathbb{R}^2$ preserving the measure.
\item Group property:
for every $x\in \mathbb{R}^2$, $z<1$ and every $t_1,t_2$ with $t_2+z<1$ and $t_1+t_2+z\leq 1$
\begin{equation}\label{group.property.lemma}\chi^{(z+t_2)}(t_1,\chi^{(z)}(t_2,x))=\chi^{(z)}(t_1+t_2,x).\end{equation}
\end{itemize}
Moreover the following properties are fulfilled for $\chi^{(0)}:$
\begin{itemize}
\item Explicit formula for $t=1/2:$ For every $x\in (0,1)^2:$
\begin{equation}\label{action.at.time.t.1.2}\chi^{(0)}(1/2,(x_1,x_2))=\left\{\begin{array}{cl}(x_1/2+\lfloor 2x_2\rfloor/2,2x_2-\lfloor 2x_2\rfloor)&\text{if $x_2\neq 1/2$},\\
(x_2,-x_1+1)&\text{if $x_2=1/2$}\end{array}\right.\end{equation}
where $\lfloor\cdot \rfloor$ stands for the usual integer part.
\item Collapsing property at time $1$:
\begin{equation}\label{collapsing.property}
\chi^{(0)}(1,(0,1)\times \{x_2\})\quad \text{is a singleton}
\end{equation}
for every $x_2\in (0,1)\setminus Z$
where
$$Z:=\left\{\frac{j}{2^{i}}|\text{ }0\leq j\leq 2^i,i\geq 1\right\}.$$

\item Defining $\gamma:(0,1)\setminus Z\rightarrow (0,1)^2$ by
$$\gamma(x_2):=\chi^{(0)}(1,(0,1)\times \{x_2\})$$ we have that
\begin{equation}\label{gamma.bijection}\gamma\quad \text{is a bijection from $(0,1)\setminus Z$ onto $\big((0,1)\setminus Z\big)^2.$}
\end{equation}
Moreover \begin{equation}\label{chi.0.bord.sur.bord}\chi^{(0)}(1,(x_1,x_2))\in (Z\times (0,1))\cup ((0,1)\times Z)\end{equation}
for every $x_1\in (0,1)$ and every $x_2\in Z.$
Furthermore $\gamma$ and $\chi^{(0)}(1,\cdot)$ preserve the measure.
\end{itemize}
\end{lemma}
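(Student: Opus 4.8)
The plan is to realise every $\chi^{(z)}$ as a concatenation of the explicit rotation flows $\xi^c,\xi^d$ of Lemma~\ref{lemma.rotation.carre}. On each of the slabs $[0,1/4)$ and $[1/4,1/2)$, and — after a translation and a $2^{-i}$ rescaling — on each of the two halves of every later slab $[t_i,t_{i+1})$ restricted to a dyadic square $C^i_j$, the field $b(t,\cdot)$ is exactly one of $c$, $d$, $-c$ or $-d$ (translated and rescaled); since the normal component of $c$ and of $d$ vanishes on the boundary of the supporting rectangle, the flow on such a slab is the corresponding (translated, rescaled, possibly time‑reversed) copy of $\xi^c$ or $\xi^d$, and these pieces glue continuously, remain measure preserving and bijective, and satisfy the semigroup identity. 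Concretely I would first build the transition map $\Psi(s_1,s_0,\cdot)$ of $b$ for $s_0\le s_1<1$ by composing finitely many such pieces up to any $b$‑time $t_N<1$; then, since a particle that sits in a square $C^i_j$ at $b$‑time $t_i$ stays in $\overline{C^i_j}$ for every $b$‑time in $[t_i,1)$ (as $b$ is supported in $\bigcup_j C^i_j$ with zero normal component on each $\partial C^i_j$ there), the maps $s\mapsto\Psi(s,s_0,\cdot)$ are Cauchy as $s\uparrow 1$ (displacement at most $\sqrt2\,2^{-i}$ past $t_i$), so $\Psi$ extends continuously to $b$‑time $1$. Finally I would set $\chi^{(z)}(t,x):=x$ for $t+z\le 0$ and $\chi^{(z)}(t,x):=\Psi(t+z,\max(z,0),x)$ otherwise; then \eqref{flow.of.b}, the measure‑preserving bijectivity on $\{t<1-z\}$ (which fails exactly at $t=1-z$ because of the collapse below), and the group property \eqref{group.property.lemma} all follow from the corresponding properties of $\Psi$ and of each $\xi^c,\xi^d$.

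For \eqref{action.at.time.t.1.2} I would simply compose the three quarter‑turns making up $\chi^{(0)}$ on $[0,1/2)$. On $[0,1/4)$ the two copies of $\xi^d$ centred at $(1/2,1/4)$ and $(1/2,3/4)$ send, using $\xi^d(1/4,(y_1,y_2))=(-2y_2,y_1/2)$, a point $(x_1,x_2)$ with $x_2<1/2$, resp.\ $x_2>1/2$, to $(1-2x_2,x_1/2)$, resp.\ $(2-2x_2,x_1/2+1/2)$, while fixing $\{x_2=1/2\}$; on $[1/4,1/2)$ the copy of $-c$ centred at $(1/2,1/2)$ sends $(p_1,p_2)\mapsto(p_2,1-p_1)$, since $\xi^{-c}(1/4,z)=\xi^c(-1/4,z)=(z_2,-z_1)$. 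Composing these gives exactly $\chi^{(0)}(1/2,(x_1,x_2))=(x_1/2+\lfloor 2x_2\rfloor/2,\{2x_2\})$ for $x_2\ne1/2$ and $(x_2,1-x_1)$ for $x_2=1/2$; I will write $G$ for this map and record that $G$ carries the lower half‑square onto the left half‑square and the upper onto the right, each by a non‑flipping anisotropic affine scaling.

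The remaining assertions come from iterating $G$ through the self‑similar structure of $b$. By construction, for $t\in[t_i,t_{i+1})$ one has $\chi^{(0)}(t,x)=l^i_j+2^{-i}\widehat\chi\!\big(2^i(t-t_i),2^i(\chi^{(0)}(t_i,x)-l^i_j)\big)$ on $C^i_j$, where $\widehat\chi$ denotes the base flow on $[0,1/2)$; hence at $b$‑time $t_{i+1}$ the flow acts in $C^i_j$‑relative coordinates by $G$. Now $G$ takes a full‑width horizontal fibre at height $0.c_1c_2c_3\cdots$ (in binary) to a full‑width horizontal fibre, at relative height $0.c_3c_4\cdots$, inside the dyadic sub‑square with $x_1$‑digit $c_1$ and $x_2$‑digit $c_2$. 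Writing $x_2=0.b_1b_2b_3\cdots$ with $x_2\notin Z$ (so the expansion is unique and not eventually constant), an induction then shows that $\chi^{(0)}(t_i,(0,1)\times\{x_2\})$ is a full‑width horizontal fibre, at relative height $0.b_{2i+1}b_{2i+2}\cdots$, inside the dyadic square of side $2^{-i}$ with $x_1$‑address $b_1b_3\cdots b_{2i-1}$ and $x_2$‑address $b_2b_4\cdots b_{2i}$. Letting $i\to\infty$ and using continuity of $\chi^{(0)}(\cdot,x)$ at $t=1$ yields \eqref{collapsing.property}, with $\gamma(0.b_1b_2b_3\cdots)=(0.b_1b_3b_5\cdots,\,0.b_2b_4b_6\cdots)$: thus $\gamma$ is the de‑interleaving of binary digits, which is injective (re‑interleave to recover $x_2$), maps onto $\big((0,1)\setminus Z\big)^2$ up to the null set of addresses with an eventually constant digit subsequence, and pushes Lebesgue measure on $(0,1)$ forward to Lebesgue measure on $(0,1)^2$; since $\chi^{(0)}(1,\cdot)$ factors as $\gamma$ composed with the projection onto $x_2$, it pushes Lebesgue measure on $(0,1)^2$ forward to Lebesgue measure on $(0,1)^2$, which gives \eqref{gamma.bijection}. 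Finally \eqref{chi.0.bord.sur.bord} follows from the same iteration: if $x_2\in Z$ the relative height becomes dyadic after finitely many steps, so the fibre lands on a line $\{x_1\in 2^{-m}\mathbb{Z}\}$ or $\{x_2\in 2^{-m}\mathbb{Z}\}$, on which $b$ vanishes from then on, so the fibre is frozen and its image lies in $(Z\times(0,1))\cup((0,1)\times Z)$.

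The part I expect to be most delicate is this last step: verifying that the restriction of $\chi^{(0)}$ to $C^i_j\times[t_i,t_{i+1})$ is literally the rescaled base flow, tracking the precise dyadic $x_1$‑ and $x_2$‑addresses of the shrinking fibre through $G$, and handling cleanly the dyadic exceptional set $Z$ together with the existence, uniqueness and continuity of the flow at the accumulation time $t=1$.
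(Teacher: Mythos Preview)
Your approach is essentially the same as the paper's: both construct $\chi^{(z)}$ by gluing the explicit rotation flows $\xi^c,\xi^d$ slab by slab and extending to $t=1$ by continuity, compute \eqref{action.at.time.t.1.2} by composing the three quarter-turns, and analyse $\gamma$ via digit expansions---you work in binary, the paper in base $4$, which is the same thing since one base-$4$ digit is two binary digits (your de-interleaving formula $\gamma(0.b_1b_2\cdots)=(0.b_1b_3\cdots,0.b_2b_4\cdots)$ agrees with the paper's rule once one unpacks $\alpha_i=2b_{2i-1}+b_{2i}$). One small bookkeeping slip: your formula $\chi^{(z)}(t,x)=x$ for $t+z\le 0$ is wrong when $z>0$, since flowing backward from $b$-time $z$ past $b$-time $0$ is nontrivial on $[0,z]$; the easy fix is $\chi^{(z)}(t,x)=\Psi\bigl(\max(t+z,0),\max(z,0),x\bigr)$ in that range.
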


\begin{proof}
\textit{\underline{Step 1.}}
We first exhibit $\chi^{(0)}$.
First for $x\notin [0,1]^2$ and $t\leq 1$ and for $x\in \mathbb{R}^2$ and $t\leq 0$ we obviously let $\chi^{(0)}(t,x)=x$.
For $x\in [0,1]^2$ and $t\in [0,1]$ we proceed as follows:
We first define $\chi^{(0)}(t,x)$ for $t\in [0,1/4]$ as:
$$\chi^{(0)}(t,x)\\
:=\left\{\begin{array}{cl}
\xi^d(t,x-(1/2,1/4))+(1/2,1/4)&\text{for $x\in [0,1]\times [0,1/2]$}\\
\xi^d(t,x-(1/2,3/4))+(1/2,3/4)&\text{for $x\in [0,1]\times [1/2,1].$}
\end{array}\right.
$$
We then define it for $t\in [1/4,1/2]$ in the following way:
$$\chi^{(0)}(t,x)=\xi^c(-(t-1/4),\chi^{(0)}(1/4,x)).$$
We next define it for $t\in [t_1,t_2]=[1/2,1/2+1/4]$ as follows:
define $y_1:=\chi^{(0)}(t_1,x)$ and let $1\leq j\leq 4$ be such that $y_1\in C^1_j$ and define
\begin{equation}\label{definition.chi.t.1}\chi^{(0)}(t,x):=\frac{1}{2}\chi^{(0)}(2(t-t_1,y_1-l^1_j))+l^1_j.\end{equation}
We then define it by induction for $t\in [t_i,t_{i+1}]$, $i\geq 2$ as follows:
Denote $y_i:=\chi^{(0)}(t_i,x)$ and let $1\leq j\leq 4^i$ be such that $y_i\in C^i_j.$ We then let
\begin{equation}\label{definition.chi.t.i}\chi^{(0)}(t,x):=\frac{1}{2^i}\chi^{(0)}(2^i(t-t_i,y_i-l^i_j))+l^i_j.
\end{equation} Finally we extend $\chi^{(0)}(t,x)$ to $t=1$ by continuity.
We define $\chi^{(z)}$ similarly. It is then a simple exercise to check that the first four properties listed in the statement of the lemma are verified. \smallskip

\textit{\underline{Step 2.}}
We prove \eqref{action.at.time.t.1.2}. First, from Lemma \ref{lemma.rotation.carre}, $\chi^{(0)}(1/2,\cdot)$ consists of a rectangle rotation of angle
$+\pi/2$ in the rectangles
$(0,1)\times (0,1/2)$ and $(0,1)\times (1/2,1)$  followed by a square rotation of angle $-\pi/2$ in
the square $(0,1)^2$ (see  Figure 6).
The rectangle rotation in $(0,1)\times (0,1/2)$, resp. the rectangle rotation in $(0,1)\times (1/2,1)$,  is the
map, using Lemma \ref{lemma.rotation.carre} (ii),
$$v_1(x_1,x_2):=\xi^d(1/2,x_1-1/2,x_2-1/4)+(1/2,1/4)=(-2x_2+1,x_1/2),$$ resp., $$v_2(x_1,x_2):=\xi^d(1/2,x_1-1/2,x_2-3/4)+(1/2,3/4)=(-2x_2+2,x_1/2+1/2).$$
Moreover the square rotation (by the same argument) is easily seen to be the map
$$h(x_1,x_2):=(x_2,-x_1+1).$$
Hence we get
\begin{align*}&\chi^{(0)}(1/2,(x_1,x_2))\\
=&\left\{\begin{array}{cl}
h(v_1(x_1,x_2))=(x_1/2,2x_2)&\text{for $(x_1,x_2)\in (0,1)\times (0,1/2)$}\\
h(v_2(x_1,x_2))=(x_1/2+1/2,2x_2-1)&\text{for $(x_1,x_2)\in (0,1)\times (1/2,1)$,}
\end{array}\right.
\end{align*}
showing the first equation in \eqref{action.at.time.t.1.2}.
When $x_2=1/2$ both rectangle rotations act trivially ($(x_1,1/2)$ is sent to $(x_1,1/2)$)
while the square rotation sends $(x_1,1/2)$ to $(1/2, -x_1-1)$ which shows the second equation in \eqref{action.at.time.t.1.2}.
\smallskip

\textit{\underline{Step 3.}} We now prove \eqref{collapsing.property}.
From \eqref{action.at.time.t.1.2} we have in particular that
 for every $x_2\in (0,1)\setminus \{1/2\}$,  the fiber $(0,1)\times \{x_2\}$ is send
by $\chi^{(0)}(1/2,\cdot)$ to the
fiber of length $1/2$
$$(m_1(x_2),1/2+m_1(x_2))\times\{n_1(x_2)\}$$
where $m_1(x_2):=1/2\lfloor 2x_2\rfloor\in \{0,1/2\}$ and $n_1(x_2):=2x_2-\lfloor 2x_2\rfloor$.
Trivially $n_1(x_2)$ does not belong $Z$ whenever $x_2$ does not belong to $Z$ where we recall that $$Z=\left\{\frac{j}{2^{i}}|\text{ }0\leq j\leq 2^i,i\geq 1\right\}.$$
Next, using \eqref{definition.chi.t.1}, a direct calculation gives
that, for every $x_2\in (0,1)\setminus \{1/4,1/2,3/4\}$, $\chi^{(0)}(t_2,\cdot)$ sends $(0,1)\times \{x_2\}$ to the fiber of length $1/4$
$$(m_2(x_2),1/4+m_2(x_2))\times \{n_2(x_2)\}$$
where $m_2(x_2)\in \{0,1/4,1/2,3/4\}$ and where
$$n_2(x_2)\in (0,1)\setminus Z\quad \text{whenever $x_2\in (0,1)\setminus Z$}.$$
Proceeding by induction, we obtain that, for every $i\geq 2$ and for every $x_2\in (0,1) \setminus Z,$
$$\chi^{(0)}\left(t_i;(0,1)\times\{x_2\}\right)=(m_i(x_2),2^{-i}+m_i(x_2))\times
\{n_i(x_2)\}$$
for some  $m_i(x_2)\in \{j2^{-i}|\text{ }0\leq j<2^i\}$ and $n_i(x_2)\in (0,1)\setminus Z.$
Letting $i$ going to $\infty$ we eventually obtain \eqref{coll.2d}.
\smallskip

\textit{\underline{Step 4.}}
First thanks to \eqref{collapsing.property} $\gamma$ is well defined.
Writing every $x_2\in (0,1)$ in base four, i.e
$$x_2=0,\alpha_1\alpha_2\cdots$$ with $\alpha_i\in \{0,1,2,3\}$ and
$$x_2=\sum_{i=1}^{\infty}\alpha_i4^{-i}$$ we get that
$$Z=\{x_2\in (0,1):\text{ $\exists I$ such that $\alpha_i=0$ for every $i\geq I$ or $\alpha_i=3$ for every $i\geq I$.}\}$$
Writing $\gamma(x_2)=(\gamma^1(x_2),\gamma^2(x_2))$ is base 2 i.e
$$\gamma^j(x_2)=0,\beta^j_1\beta^j_2\cdots$$ with $\beta^j_i\in \{0,1\}$ and
$$\gamma^j(x_2)=\sum_{i=1}^{\infty}\beta^j_i2^{-i}$$
we easily get by induction (see Figure 6 for $i=1$) that the $\beta^j_i$ obey the following rule
$$\beta^1_i=\left\{\begin{array}{cl}
0&\text{if $\alpha_i\in \{0,2\}$}\\
1&\text{if $\alpha_i\in \{1,3\}$}\\
\end{array}
\right.
\quad \text{and}\quad\beta^2_i=\left\{\begin{array}{cl}
0&\text{if $\alpha_i\in \{0,1\}$}\\
1&\text{if $\alpha_i\in \{2,3\}.$}\\
\end{array}
\right.
$$
From these two formulas we get at once that $\gamma$ is one-to-one on $(0,1)\setminus Z$.
Moreover, noting that
$$Z=\{y\in (0,1):\text{ $\exists I$ such that $\beta_i=0$ for every $i\geq I$ or $\beta_i=1$ for every $i\geq I$}\}$$
we get that, by the characterization of $Z$ (in base 2 and 4) and by the formula for $\gamma$,
$$\gamma((0,1)\setminus Z)=\big((0,1)\setminus Z\big)^2$$ proving \eqref{gamma.bijection}.

Next noting that $\chi^{(0)}(1/2,\cdot)$ is the identity on $\partial [0,1]^2$ and sends (using \eqref{action.at.time.t.1.2})
$$(0,1)\times \{1/4,1/2,34\}\quad \text{to}\quad ((0,1)\times \{1/2\})\cup (\{1/2\}\times (0,1))$$ we easily get \eqref{chi.0.bord.sur.bord} proceeding by induction.

We finally establish the claim concerning the preservation of the measure.
First, by definition of $\gamma$, $\chi^{(0)}(1,\cdot)$ preserves the measure (from $(0,1)^2$ to $(0,1)^2$) if and only if $\gamma$ preserves the measure (from $(0,1)$ to $(0,1)^2$).
Then we get that $\chi^{(0)}(1,\cdot)$ is measure preserving as the pointwise limit of the measure preserving maps $\chi^{(0)}(1-1/n,\cdot)$.
One other direct way to prove the claim is to notice that (using the formula for $\gamma$)
for every $i\geq 1$ and every $0\leq k<4^i$
the "interval" $\{x_2\in (0,1)\setminus Z: k/4^i<x_2<(k+1)/4^i$ of length $4^{-i}$ is sent by $\gamma$ to the "square"
$$\big((l_{x_2},l_{x_2}+2^{-i})\setminus Z\big)\times \big((m_{x_2},m_{x_2}+2^{-i})\setminus Z\big)$$ of area $4^{-i}$ for some $l_{x_2},m_{x_2}\in Z$; hence by bijectivity  of $\gamma$ we get that $\gamma$ is measure preserving.

\end{proof}

\begin{figure}
\begin{center}
\begin{tikzpicture}[scale=0.6]

\draw[very thick] (-8,8)--(-4,8);\draw[very thick] (-8,6)--(-4,6);\draw[very thick] (-8,4)--(-4,4);
\draw (-8,7)--(-4,7);\draw (-8,5)--(-4,5);
\draw[very thick](-8,8)--(-8,4);\draw[very thick] (-4,8)--(-4,4);
\draw[very thick] (-6,5) node[scale=1]{$\curvearrowleft$};
\draw (-6,5) node[scale=0.5]{$\bullet$};\draw (-5.55,5.2) node[scale=0.5]{$+\pi/2$};
\draw (-6,4.5) node[scale=1]{$1$};\draw (-6,5.5) node[scale=1]{$2$};
\draw (-6,6.5) node[scale=1]{$3$};\draw (-6,7.5) node[scale=1]{$4$};
\draw[very thick] (-6,7) node[scale=1]{$\curvearrowleft$};
\draw (-6,7) node[scale=0.5]{$\bullet$};\draw (-5.55,7.2) node[scale=0.5]{$+\pi/2$};

\draw (-3,6) node[scale=1]{$\rightarrow$};

\draw[very thick] (-2,8)--(2,8);\draw (-2,6)--(2,6);\draw[very thick] (-2,4)--(2,4);
\draw[very thick] (-2,4)--(-2,8);\draw (0,4)--(0,8);\draw[very thick] (2,4)--(2,8);
\draw[very thick] (0,6) node[scale=1.5]{$\curvearrowright$};
\draw (0,6) node[scale=0.5]{$\bullet$};\draw (0.4,6.3) node[scale=0.5]{$-\pi/2$};
\draw (-1,7) node[scale=1]{$4$};\draw (1,7) node[scale=1]{$3$};
\draw (-1,5) node[scale=1]{$2$};\draw (1,5) node[scale=1]{$1$};

\draw (3,6) node[scale=1]{$\rightarrow$};

\draw (4,8)--(8,8);\draw (4,6)--(8,6);\draw (4,4)--(8,4);
\draw (4,4)--(4,8);\draw (6,4)--(6,8);\draw (8,4)--(8,8);
\draw (5,7) node[scale=1]{$2$};\draw (7,7) node[scale=1]{$4$};
\draw (5,5) node[scale=1]{$1$};\draw (7,5) node[scale=1]{$3$};

\end{tikzpicture}
\caption{The action of $\xi^{(0)}(1/2,\cdot)$}
\end{center}
\end{figure}
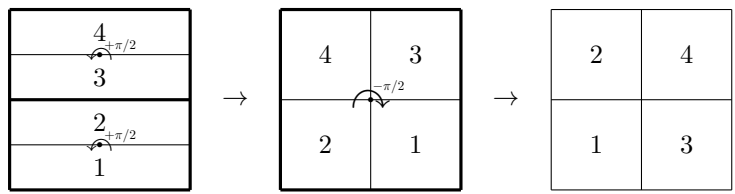

\section*{Acknowledgements}

The author Olivier Kneuss is supported by the 
CNPq-Science without Borders, BJT 2014, through the grant
400378/2014-0. 
The author Wladimir Neves is partially supported by
CNPq through the grant 308652/2013-4, and by FAPERJ 
(Cientista do Nosso Estado) 
through the grant E-26/203.043/2015.



\begin{thebibliography}{9}

\bibitem {Aizenman} Aizenman M.,  On vector fields as generators of flows: a counterexample to Nelson's conjecture,
    \textit{Ann. Math.}, \textbf{107} (1978), 287--296.

\bibitem{A.B.C.Weak.Sard} Alberti G., Bianchini S. and Cirppa G., A uniqueness result for the continuity equation
    in two dimensions,
\textit{J. Eur. Math. Soc.}, \textbf{16} (2014), 201-234

\bibitem{A.B.C.Structure.Lip} Alberti G., Bianchini S. and Cirppa G., Structure of level sets and Sard-type
    properties of Lipschitz maps,
\textit{Ann. Scuola Norm. Sup. Pisa Cl. Sci.}, \textbf{12} (2013), 863-902

\bibitem{Ambrosio} Ambrosio L., Transport equation and Cauchy problem for BV vector fields, \textit{Invent.
Math.}, \textbf{158} (2004), 227--260

\bibitem{ChemetovNeves1} Chemetov N., Neves W., {\it The Generalized Buckley Leverett System: Solvability},
Arch. for Rational Mechanics and Analysis, \textbf{208} (1) (2013), 1--24. 

\bibitem{ChemetovNeves2} Chemetov N., Neves W., 
{\it On a generalized Muskat-Brinkman type problem}, Interface and Free-Boundaries, \textbf{16} (2014), 339--357.

\bibitem {C-L-R}Colombini F, Luo T and Rauch J., Uniqueness and nonuniqueness for nonsmooth divergence free
    transport, \textit{Seminaire EDP} \textbf{XXII} (2002), 1--21.

\bibitem{DeLellis} De Lellis C., Ordinary differential
equations with rough coefficients and the renormalization theorem of
Ambrosio, Bourbaki Seminar, Preprint, (2007) 1-26.

\bibitem{Depauw} Depauw N.,Non-unicit\'e du transport par un champ de vecteurs presque BV \textit{Seminaire EDP},
    \textbf{XIX} (2002), 1--9.

\bibitem{DiPerna-Lions} DiPerna R.J., Lions P.-L., Ordinary differential equations, transport theory and
Sobolev spaces, \textit{Invent. Math.}, \textbf{98} (1989), 511--547.

\bibitem{FNO} Fedrizzi E., Neves W., Olivera C., 
On a class of stochastic transport equations for $L^2_{\rm loc}$ vector fields,
\textit{Annali della Scuola Normale Superiore di Pisa, Classe di Scienze}, accepted for publication, 2016. 

\bibitem{lion1}
Lions P.L.,  Mathematical topics in fluid mechanics, Vol. I: incompressible models. Oxford
Lecture Series in Mathematics and its applications, 3 (1996), Oxford University Press.

\bibitem{lion2}
Lions P.L.,  Mathematical topics in fluid mechanics, Vol. II: compressible models. Oxford
Lecture Series in Mathematics and its applications, 10 (1998), Oxford University Press.

\bibitem {Nelson} Nelson, E., Les écoulements incompressibles d'énergie finie, Colloques 
Internationaux du Centre National de la Recherche Scientifique, {\bf 117} (1962), 159.

\end{thebibliography}
\end{document}